\newtheorem{prop}{Proposition}[section]
\newtheorem{rem}{Remark}[section]
\newtheorem{lem}{Lemma}[section]
\newtheorem{assum}{Assumption}
\newcommand{\Rset}{\mathbb{R}}
\newcommand{\one}{\mathbb{1}}
\newcommand{\interior}{\mbox{int}\,}
\DeclareMathOperator*{\esssup}{ess\,sup}
\DeclareMathOperator{\sign}{sign}
\title{Equivalent formulations of optimal control problems\\ 
with maximum cost and applications}
\author{{\sc Emilio Molina$^{1,2,3}$, Alain Rapaport $^3$ and Hector Ramirez$^2$}\\[2mm]
$^1$ DIM \& CMM, Santiago-de-Chile, Universidad de Chile\\
$^2$ LJLL, Sorbonne Universite \& INRIA, France\\
$^3$ MISTEA, Univ. Montpellier \& INRAE, France
}
\date{\today}
\begin{document}
	
	\maketitle
	
	\begin{abstract}
	    We revisit the optimal control problem with maximum cost with the objective to provide different equivalent reformulations suitable to numerical methods. We propose two reformulations in terms of extended Mayer problems with constraint, and another one in terms of a differential inclusion with upper-semi continuous right member but without constraint. For this last one we also propose an approximation scheme of the optimal value from below. These approaches are illustrated and discussed on several examples.\\
	    
	    \noindent {\bf Keywords.} Optimal control, maximum cost, Mayer problem, state constraint, differential inclusion, numerical schemes, SIR model.\\
	    
	    \noindent {\bf Mathematics Subject Classification.} 49N90, 49J35, 49J45, 65K10, 90-08.
	\end{abstract}

	\section{Introduction}
	We consider the optimal control problem which consists in minimizing the maximum of a scalar function over a time interval 
	\[
	 \inf_{u(\cdot)}\esssup_{t \in [t_0,T]} y(t)
	\]
	where $y(t)=\theta(t,\xi(t))$ with $\xi(\cdot)$ solution of a controlled dynamics $\dot \xi=\phi(\xi,u)$, $\xi(t_0)=\xi_0$.
	This problem is not in the usual  Mayer, Lagrange or Bolza forms of the optimal control theory, and therefore is not suitable to use the classical necessary optimality conditions of Pontryagin Maximum Principle or existing solving algorithms (based on direct method, shooting or Hamilton-Bellman Jacobi equation). However, this problem falls into the class of optimal control with $L_\infty$ criterion, for which several characterizations of the value function have been proposed  in the literature \cite{BarronIshii,BarronJensenLiu,GonzalezAragone}. Typically, the value function is solution, in a general sense, of a variational inequality of the form
	\[
	\min\Big(\partial_t V+  \inf_u \partial_\xi V.\phi(x,u) \; , \; V-\theta \Big)=0 
	\]
	without boundary condition.
	Nevertheless, although necessary optimality conditions and numerical procedures have been formulated \cite{Barron,DiMarcoGonzalez1996,DiMarcoGonzalez1999,GianattiAragoneLolitoParente}, there is no practical numerical tool to solve such problems as it exists for Mayer problems, to the best of our knowledge. The aim of the present work is to study different reformulations of this problem into Mayer form in higher dimension with possibly state or mixed constraint, for which existing numerical methods can be used.  Indeed, it has already been underlined in the literature that discrete-time optimal control problems with maximum cost do not satisfy the Principle of Optimality but can be transformed into problems of higher dimension with additively separable objective functions \cite{MorganPeet2020,MorganPeet2021}. We pursue here this idea but in the continuous time framework, which faces the lack of differentiability of the max function.
	
	The paper is organized as follows. In Section \ref{sec_prelim}, we give the setup, hypotheses and define the problem. In Section \ref{sec_constraint}, we give equivalences with two Mayer problems with fixed initial condition, under state or mixed constraint.  In Section \ref{sec_noconstraint}, we propose another formulation without constraint in terms of differential inclusion, and then show how the optimal value can be approximated from below by a sequence of more regular Mayer problems.
	Section \ref{sec_illust} is devoted to numerical illustrations. We first consider a very particular class of problems for which we are able to give explicitly the optimal solution, which allows to compare the numerical performances of the different formulations. We then consider a more sophisticated problem from epidemiology, and discuss the various issues in numerical implementations of the different formulations. We also compare numerically with $L_p$ approximations.
	Finally, we discuss in Section \ref{sec_conclusion} about the potential merits of the different formulations as practical methods to compute optimal solution of $L_\infty$ control problems.

	\section{Problem and hypotheses}\label{sec_prelim}
	
	We shall consider autonomous dynamical systems defined on a invariant domain ${\cal D}$ of $\Rset^{n+1}$ of the form
\begin{equation}
\label{sys}
\left\{\begin{array}{l}
	\dot x = f(x,y,u)\\
	\dot y = g(x,y,u)
\end{array}\right.
\end{equation}
(where $g$ is a scalar function) with $u \in U \subset \Rset^p$. Throughout the paper, we shall assume that the following properties are fulfilled.
\begin{assum}$ $
	\label{assum1}
\begin{enumerate}[label=\roman*.]
\item $U$ is a compact set.	
\item The maps $f$ and $g$ are $C^1$ on ${\cal D}\times U$.
\item The maps $f$ and $g$ have linear growth, that is there exists a number $C>0$ such that
\begin{equation*}
	||f(x,y,u)||+|g(x,y,u)|\leq C(1+||x||+|y|), \; (x,y)\in {\cal D}, \; u \in U
\end{equation*}	
\end{enumerate}
\end{assum}

\medskip

For instance, $y(\cdot)$ can be a smooth output of a dynamics
\[
\dot x=f(x,u), \quad y=h(x)
\]
which can be rewritten as
\[
\left\{\begin{array}{l}
\dot x=f(x,u)\\
\dot y =g(x,u):=\nabla h(x)^T\cdot f(x,u)
\end{array}\right.
\]

\medskip

Let ${\cal U}$ be the set of measurable functions $u(\cdot): [0,T] \mapsto U$ and consider $(x_0,y_0)\in {\cal D}$, $T>0$. Under the usual arguments of the theory of ordinary differential equations,  Assumption \ref{assum1} ensures that for any $u(\cdot) \in {\cal U}$ there exists an unique absolutely continuous solution $(x(\cdot),y(\cdot))$ of \eqref{sys} on $[0,T]$ for the initial condition $(x(0),y(0))=(x_0,y_0)$. Define then the solutions set
\[
{\cal S} := \{ (x(\cdot),y(\cdot)) \in {\cal AC}([0,T],\Rset^{n+1}), \mbox{ sol. of \eqref{sys} for } u(\cdot) \in {\cal U} \mbox{ with } (x(0),y(0))=(x_0,y_0)\}
\]
We consider then the optimal control problem which consists in minimizing the "peak" of the function $y(\cdot)$:
\begin{equation*}
{\cal P} : \quad \inf_{u(\cdot) \in {\cal U}} \left ( \max_{t \in [0,T]}y(t)\right)=\inf_{(x(\cdot),y(\cdot)) \in {\cal S}} \left(\max_{t \in [0,T]}y(t)\right)
\end{equation*}

\section{Formulations with constraint}
\label{sec_constraint}
A first approach considers the family of constrained sets of solutions
\[
{\cal S}_z := \{ (x,y) \in {\cal S}, \; y(t) \leq z, \, t \in [0,T] \}, \quad (z \in \Rset)
\]
and to look for the optimization problem
\[
\inf \{ z; \;  {\cal S}_z \neq  \emptyset \}
\]
This problem can be reformulated as a Mayer problem
\begin{equation*}
{\cal P}_0: \inf_{u(\cdot) \in {\cal U}} z(T) 
\end{equation*}
for the extended dynamics in ${\cal D}\times\Rset$
\begin{equation*}
\left\{\begin{array}{l}
\dot x= f(x,y,u)\\
\dot y = g(x,y,u)\\
\dot z = 0
\end{array}\right. 
\end{equation*}
under the state constraint 
\begin{equation*}
	{\cal C}: \quad z(t)-y(t)\geq 0, \; t \in [0,T]
\end{equation*}
where $z(0)$ is free.
Direct methods can be used for such a problem. However, as $z(0)$ is free, solutions are not sought among solutions of a Cauchy problem, which prevents using other methods based on dynamic programming such as the Hamilton-Jacobi-Bellman equation.

\bigskip

We propose another extended dynamics in ${\cal D}\times\Rset$ with an additional control $v(\cdot) \in [0,1]$ 
\begin{equation}
\label{sysext}
\left\{\begin{array}{l}
\dot x= f(x,y,u)\\
\dot y = g(x,y,u)\\
\dot z = \max(g(x,y,u),0)(1-v)
\end{array}\right. 
\end{equation}
Let ${\cal V}$ be the set of measurable functions $v: [0,T] \mapsto [0,1]$.
Note that under Assumption \ref{assum1}, for any $(x_0,y_0,z_0) \in {\cal D}\times \Rset$ and $(u,v) \in {\cal U}\times{\cal V}$, there exists an unique absolutely solution $(x(\cdot),y(\cdot),z(\cdot))$ of \eqref{sysext} on $[0,T]$ for the initial condition $(x(0),y(0),z(0))=(x_0,y_0,z_0)$.
Here, we fix the initial condition with $z_0=y_0$ and consider the Mayer problem
	\begin{equation*}
		{\cal P}_1: \quad \inf_{(u(\cdot),v(\cdot))\in{\cal U}\times{\cal V}} z(T)
		\quad \mbox{under the constraint } {\cal C}
	\end{equation*}
and shows its equivalence with problem ${\cal P}$. We first consider fixed controls $u(\cdot)$.
\begin{prop}
\label{prop0}
	For any control $u(\cdot) \in {\cal U}$, the optimal control problem
	\begin{equation}
	\label{pb_aux}
	\inf_{v \in {\cal V}} z(T) \mbox{ under the constraint } {\cal C}
	\end{equation}
	admits an optimal solution. Moreover,  an optimal solution verifies
	\begin{equation}
	z(T)=\max_{t \in [0,T]}y(t) .
	\end{equation}
	and is reached for a control $v(\cdot)$ that takes values in $\{0,1\}$.
\end{prop}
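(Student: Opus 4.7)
The plan is to establish the equality $z(T) = \max_{t \in [0,T]} y(t)$ at the optimum by proving both inequalities, the upper one via an explicit bang-bang construction that also yields existence.

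First, for the lower bound, fix any $v(\cdot) \in {\cal V}$ and let $(x,y,z)$ be the associated solution of \eqref{sysext}. Since $\max(g(x,y,u),0) \geq 0$ and $1-v \geq 0$, we have $\dot z \geq 0$ a.e., so $z$ is nondecreasing on $[0,T]$. The constraint ${\cal C}$ then yields $z(T) \geq z(t) \geq y(t)$ for every $t \in [0,T]$, whence $z(T) \geq \max_{t \in [0,T]} y(t)$.

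For the matching upper bound, I introduce the running maximum $M(t) := \max_{s \in [0,t]} y(s)$. Under Assumption \ref{assum1}, the trajectory $y(\cdot)$ is absolutely continuous on $[0,T]$, and so is $M(\cdot)$. Define the closed set $E := \{ t \in [0,T] : y(t) = M(t) \}$ and the bang-bang candidate $v^*(t) := 1 - \one_E(t) \in \{0,1\}$, which is measurable and thus belongs to ${\cal V}$. The key claim is that the corresponding solution $z^*$ of \eqref{sysext} coincides with $M$ on $[0,T]$.

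To verify the claim it suffices, since $z^*(0) = y_0 = M(0)$, to check $\dot z^* = \dot M$ a.e. On the open complement of $E$, $M$ is locally constant so $\dot M = 0$, while by definition $1-v^* = 0$, giving $\dot z^* = 0$. On $E$, at any Lebesgue density point where $y$ (hence $M$) is differentiable, the nonnegative function $M - y$ vanishes on a set of density one, which forces $\dot M(t) = \dot y(t) = g(t)$; since $M$ is nondecreasing one then gets $g(t) \geq 0$, whence $\dot z^*(t) = \max(g(t),0) = g(t) = \dot M(t)$. This computation of the a.e.\ derivative of the running maximum of an absolutely continuous function is the only technical step; I expect it to be the main obstacle and would isolate it as an auxiliary lemma. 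Once $z^* = M$ is established, the constraint ${\cal C}$ reads $z^*(t) = M(t) \geq y(t)$ and is automatic, $v^*$ is admissible, and $z^*(T) = M(T) = \max_{t \in [0,T]} y(t)$, matching the lower bound. Existence of an optimal solution, the value formula, and the $\{0,1\}$-valued character of the optimizer follow simultaneously.
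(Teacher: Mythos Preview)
Your proof is correct, and in fact your bang--bang control $v^*=1-\one_E$ coincides with the paper's choice: the paper sets $v=1$ on the interior of the set $I$ of points ``invisible from the left'' and $v=0$ elsewhere, and one checks that $I=E^c$ (up to a null set), since $y(t)<M(t)$ exactly when some earlier value of $y$ exceeds $y(t)$. The lower bound argument is also identical to the paper's.

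The genuine difference is in how the identity $z^*=M$ is verified. The paper appeals to the \emph{rising sun lemma} to write $\interior I$ as a countable union of disjoint intervals $(a_n,b_n)$ with $y(a_n)=y(b_n)$ (unless $b_n=T$), and then reads off $z^*$ piecewise: $z^*(t)=y(t)$ outside $\interior I$ and $z^*(t)=y(a_n)$ on each $I_n$. Your route instead matches derivatives a.e.: you use that $M$ is locally constant on the open set $E^c$, and on $E$ you invoke the Lebesgue density theorem to force $\dot M=\dot y=g\geq 0$ at a.e.\ point of $E$. Your approach avoids the rising sun lemma entirely and is arguably more direct, at the cost of the auxiliary facts that the running maximum of an absolutely continuous function is absolutely continuous and that two differentiable functions agreeing on a set of density one at $t$ have equal derivatives there---both standard, and you rightly flag the latter as the only technical step. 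The paper's approach, in exchange, yields a more explicit structural picture of the optimal trajectory $z^*$ (the plateaus over the intervals $I_n$), which is what is illustrated in Figure~\ref{figex}.
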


\begin{proof}
	From equations \eqref{sysext}, one get that any solution $z(\cdot)$ is non decreasing, and as $z$ satisfies the constraint $z\geq y$, we deduce that one has
\begin{equation}
\label{ineqZTbelow}
	z(T)\geq \max_{t \in [0,T]} y(t)
\end{equation}
for any solution of \eqref{sysext}, and thus
\begin{equation*}
\max_{t \in [0,T]} y(t) \leq 	\inf_{v \in {\cal V}} z(T) \mbox{ under the constraint } z(t)\geq y(t), \; t \in [0,T] .
\end{equation*}
Let $x(\cdot)$, $y(\cdot)$ be the solution of \eqref{sys} for the control $u(\cdot)$ and let $I$ be the set of {\em invisible points from the left} of $y$, that is
\[
I:= \left\{ t \in (0,T) ; \; y(t')>y(t) \mbox{ for some } t'<t \right\} .
\]
Consider then the control
\begin{equation}
\label{controlv}
v(t)=\begin{cases}
1, & t \in \interior I,\\
0, & t \notin \interior I
\end{cases}
\end{equation}
When $I$ is empty, $y(\cdot)$ is a non decreasing function, and with the control $v=1$, one has $z(t)=y(t)$ for any $t \in [0,T]$. Therefore one has
\[
z(T)=y(T)=\max_{t \in [0,T]} y(t)
\]
When $I$ is non empty, there exists, from the sun rising Lemma \cite{Tao}, a countable set of disjoint non-empty intervals $I_n=(a_n,b_n)$ of $[0,T]$ such that 
\begin{itemize}
	\item[-] the interior of $I$  is the union of the intervals $I_n$,
	\item[-] one has $y(a_n)=y(b_n)$ if $b_n\neq T$,
	\item[-] if $b_n=T$, then $y(a_n)\geq y(b_n)$.
\end{itemize}
Note that when $t \notin \interior I$, one has $y(t)\geq y(t')$ for any $t'\leq t$. Therefore, the solution $z$ with control \eqref{controlv} verifies
\[
z(t)=\begin{cases}
y(t), & t \notin \interior I\\
y(a_n), & t \in I_n \mbox{ for some } n
\end{cases}
\]
(see Figure \ref{figex} as an illustration).
Let $\bar t \in [0,T]$ be such that
\[
 y(\bar t)=\max_{t \in [0,T]} y(t) ,
\]
which implies that any point $t'>\bar t$ in $[0,T]$ is invisible from the left. Then, one has $z(T)=z(\bar t)\leq y(\bar t)$. With \eqref{ineqZTbelow}, we obtain
\[
\max_{t \in [0,T]} y(t)=z(T)
\]
and deduce
\[
\max_{t \in [0,T]} y(t)=\inf_{v(\cdot)\in{\cal V}} z(T) \mbox{ under the constraint } {\cal C}
\]
\end{proof}

\begin{figure}[ht!]
	\begin{center}
		\includegraphics[width=0.5\textwidth]{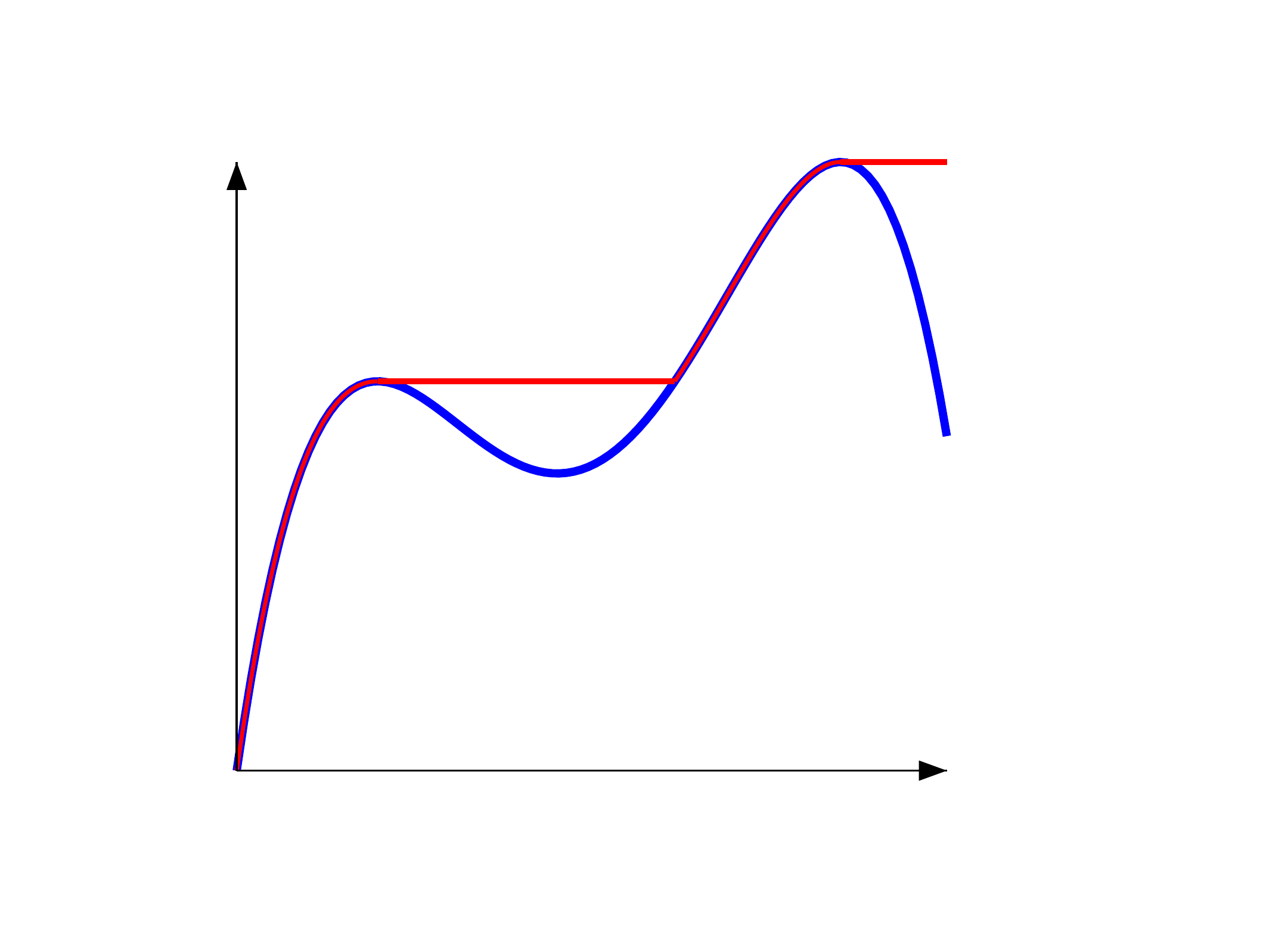}
	\end{center}
\caption{Illustration of the function $z$ (in red) corresponding to a function $y$ (in blue)  with the control given by expression \eqref{controlv}}
\label{figex}
\end{figure}

\begin{rem}
	The proof of Proposition \ref{prop_state_const}  gives an optimal construction of $z(\cdot)$ which is the lower envelope of non decreasing continuous functions above the function $y(\cdot)$, as depicted on Figure \ref{figex}. However, there is no uniqueness of the optimal control $v(\cdot)$.  Any admissible solution $z(\cdot)$ that is above $y(\cdot)$ and such that $z(t)=\hat y$ for $t \geq \hat t=\min \{ t \in (0,T], y(t)=\hat y \}$, where $\hat y:=\max_{s \in [0,T]} y(s)$, is also optimal.
\end{rem}

\medskip

We then obtain the equivalence between problems ${\cal P}_1$ and ${\cal P}$ in the following sense.

\begin{prop}
	\label{prop_state_const}
	If $(u^\star(\cdot),v^\star(\cdot))$ is optimal for Problem ${\cal P}_1$, then $u^\star(\cdot)$ is optimal for Problem ${\cal P}$. Conversely, if $u^\star(\cdot)$ is optimal for Problem ${\cal P}$, then $(u^\star(\cdot),v^\star(\cdot))$ is optimal for Problem ${\cal P}_1$ where $v^\star(\cdot)$ is optimal for the problem \eqref{pb_aux} for the fixed control $u^\star(.)$.
\end{prop}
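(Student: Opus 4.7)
My plan is to reduce the proposition to Proposition \ref{prop0}, which has already characterized the inner minimization over $v$ for every fixed $u$. The key observation is that in \eqref{sysext} the $(x,y)$-components are decoupled from $v$, so that Problem ${\cal P}_1$ splits as a nested minimization:
\[
\inf_{(u,v) \in {\cal U} \times {\cal V}} z(T) \text{ under } {\cal C} \;=\; \inf_{u \in {\cal U}} \Bigl( \inf_{v \in {\cal V}} z(T) \text{ under } {\cal C} \Bigr).
\]
For each fixed $u$ the inner bracket equals $\max_{t \in [0,T]} y(t)$ by Proposition \ref{prop0}, and this value is attained. Taking the outer infimum therefore yields the identification of optimal values $V({\cal P}_1) = \inf_{u} \max_{t} y(t) = V({\cal P})$.

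From this identity I would then derive the two implications. For the first, let $(u^\star, v^\star)$ be optimal for ${\cal P}_1$ and let $y^\star$ be the trajectory driven by $u^\star$. The constraint ${\cal C}$ combined with the monotonicity of $z^\star$ (a direct consequence of $\dot z \geq 0$) gives $z^\star(T) \geq \max_t y^\star(t)$, while the admissibility of $u^\star$ in ${\cal P}$ gives $\max_t y^\star(t) \geq V({\cal P})$. Since $z^\star(T) = V({\cal P}_1) = V({\cal P})$, the chain collapses to equalities and $u^\star$ is optimal for ${\cal P}$.

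For the converse, given $u^\star$ optimal for ${\cal P}$, I apply Proposition \ref{prop0} to the fixed control $u^\star$ to produce a $v^\star \in {\cal V}$ for which the resulting $z^\star$ satisfies $z^\star(T) = \max_t y^\star(t) = V({\cal P}) = V({\cal P}_1)$, so $(u^\star, v^\star)$ is optimal for ${\cal P}_1$. I do not expect a real obstacle: the whole difficulty has already been absorbed by Proposition \ref{prop0}, and what remains is the elementary interchange of infima together with the attainment of the inner one. The only point meriting a brief verification is that the initialization $z(0) = y_0$ built into ${\cal P}_1$ coincides with the one under which Proposition \ref{prop0} was proved, which is immediate from the setup.
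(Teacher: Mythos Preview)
Your proposal is correct and follows exactly the route the paper intends: the paper states Proposition~\ref{prop_state_const} without proof, as an immediate corollary of Proposition~\ref{prop0}, and your argument spells out precisely this reduction (the decoupling of $(x,y)$ from $v$, the nested infimum, and the two implications via the identity $V({\cal P}_1)=V({\cal P})$). There is nothing to add.
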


\bigskip

Let us give another equivalent Mayer problem but with a mixed constraint (this will be useful in the next section).
We consider again the extended dynamics \eqref{sysext} with control $v \in [0,1]$ and the initial condition $(x(0),y(0),z(0))=(x_0,y_0,y_0)$, and define the mixed constraint
\begin{equation*}
{\cal C}_m: \quad \max(y(t)-z(t),0)(1-v(t))+z(t)-y(t)\geq 0, \quad \mbox{a.e. } t \in [0,T]
\end{equation*}
with the optimal control problem
\begin{equation*}
{\cal P}_2: \quad \inf_{(u(\cdot),v(\cdot))\in{\cal U}\times{\cal V}} z(T)
\quad \mbox{under the constraint } {\cal C}_m
\end{equation*}

\begin{prop}
	\label{prop_mixed_cons}
	Problems ${\cal P}_1$ and ${\cal P}_2$ are equivalent.
\end{prop}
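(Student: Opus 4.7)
The plan is to show that the two constraints $\mathcal{C}$ and $\mathcal{C}_m$ cut out the same admissible set of trajectories of the common extended dynamics \eqref{sysext} starting from $(x_0,y_0,y_0)$. Since $\mathcal{P}_1$ and $\mathcal{P}_2$ share the same dynamics, same initial condition, same control set $\mathcal{U}\times\mathcal{V}$ and same objective $z(T)$, the equivalence of admissible sets (trajectory by trajectory) immediately yields the equivalence of the two problems.

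One direction is immediate: if a trajectory satisfies $\mathcal{C}$, i.e. $z(t)\geq y(t)$ for every $t$, then $\max(y(t)-z(t),0)=0$, so $\mathcal{C}_m$ collapses to $z(t)-y(t)\geq 0$, which holds by assumption.

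For the converse, I would reason pointwise on the (measurable) set $N:=\{t\in[0,T] : y(t)>z(t)\}$. On $N$, the mixed constraint becomes $(y-z)(1-v)+(z-y)\geq 0$, i.e. $-v(t)(y(t)-z(t))\geq 0$; since $y(t)-z(t)>0$ on $N$ and $v(t)\in[0,1]$, this forces $v(t)=0$ for almost every $t\in N$. Consequently, on $N$ one has a.e.
\[
\dot z(t)=\max(g(x(t),y(t),u(t)),0)\geq g(x(t),y(t),u(t))=\dot y(t),
\]
so $\phi:=z-y$ is absolutely continuous with $\phi(0)=z_0-y_0=0$ and $\dot\phi\geq 0$ a.e.\ on $N$. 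Suppose by contradiction that $\phi(t^\star)<0$ for some $t^\star$; set $t_1:=\sup\{s\in[0,t^\star]:\phi(s)\geq 0\}$. By continuity $\phi(t_1)=0$, $t_1<t^\star$ and $(t_1,t^\star)\subset N$, so
\[
\phi(t^\star)=\phi(t_1)+\int_{t_1}^{t^\star}\dot\phi(s)\,ds\geq 0,
\]
a contradiction. Thus $\phi\geq 0$ on $[0,T]$, which is exactly $\mathcal{C}$.

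The main difficulty lies in this converse direction: one has to handle the almost-everywhere nature of $\mathcal{C}_m$ and the mere measurability of $v$ while still concluding a pointwise inequality $z(t)\geq y(t)$ for \emph{every} $t$. The crucial observation that unlocks the argument is that on $\{y>z\}$ the mixed constraint forces $v=0$, turning the dynamics of $z$ locally into an upper envelope of $y$; the first-crossing time $t_1$ combined with absolute continuity of $\phi$ then lifts the a.e.\ sign of $\dot\phi$ to the desired global bound.
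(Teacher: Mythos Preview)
Your proof is correct and follows essentially the same approach as the paper: both argue that on the set where $z<y$ the mixed constraint forces $\dot z-\dot y\geq 0$ a.e., and then derive a contradiction with the initial condition $z(0)=y(0)$. You are simply more explicit than the paper in spelling out that $\mathcal{C}_m$ forces $v=0$ on $N$ and in handling the first-crossing time, whereas the paper asserts $\dot z-\dot y\geq 0$ on $E$ directly and concludes via openness of $E$.
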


\begin{proof}
	One can immediately see that for any admissible solution that satisfies constraint ${\cal C}$, the constraint ${\cal C}_m$ is necessarily fulfilled as $\max(y-z,0)$ is identically null.
	
	Conversely, fix an admissible control $u(\cdot)$ and consider a control $v(\cdot)$ that satisfies ${\cal C}_m$. We show that this implies that the solution $(y(\cdot),z(\cdot))$ verifies necessarily $z(t)\geq y(t)$ for any $t \in [0,T]$. If not, consider the non-empty set
	\begin{equation*}
	E:=\{ t \in [0,T]; \; z(t)-y(t)<0 \} .
	\end{equation*}
	which is open as $z-y$ is continuous. Note that one has $\dot z(t)-\dot y(t)\geq 0$ for a.e.~$t \in E$. Therefore $z-y$ is non decreasing in $E$ and we deduce that for any $t \in E$, the interval $[0,t]$ is necessarily included in $E$, which then contradicts the initial condition $z(0)=y(0)$. 
\end{proof}

\section{Formulation without constraint and approximation}
\label{sec_noconstraint}

We posit $\Pi=(x,y,z) \in {\cal D}\times\Rset$ and consider the differential inclusion
\begin{equation}
\label{diff_incl}
	\dot \Pi \in F(\Pi):= \bigcup_{(u,v) \in U\times[0,1]}  
		\begin{bmatrix}
	f(x,y,u)\\
	g(x,y,u)\\
	h(x,y,z,u,v)
	\end{bmatrix}
\end{equation}
with
\begin{equation*}
h(x,y,z,u,v)=\max(g(x,y,u),0)(1-v\one_{\Rset^+}(z-y))
\end{equation*}
where $\one_{\Rset^+}$ is the indicator function 
\begin{equation*}
\one_{\Rset^+}(\zeta)=\begin{cases}
1, & \zeta \geq 0\\
0, & \zeta <0
\end{cases}
\end{equation*}
Let $\Pi_0=(x_0,y_0,y_0)$ and denote by ${\cal S}_\ell$ the set of absolutely continuous solutions of \eqref{diff_incl} with $\Pi(0)=\Pi_0 \in {\cal D}\times\Rset$. We consider the Mayer problem
\begin{equation*}
{\cal P}_3: \quad \inf_{\Pi(\cdot) \in {\cal S}_\ell}  z(T) .
\end{equation*}

\begin{assum}
	\label{assum_conv}
	\begin{equation*}
		\forall (x,y) \in {\cal D}, \quad
		G(x,y):=\bigcup_ {u \in U}\begin{bmatrix}
			f(x,y,u)\\
			g(x,y,u)
		\end{bmatrix}  \mbox{ is convex},
	\end{equation*}
\end{assum}
\begin{prop}
\label{propP3}
	Under Assumption \ref{assum_conv},
	problem ${\cal P}_3$ admits an optimal solution. Moreover, any optimal solution $\Pi(\cdot)=(x(\cdot),y(\cdot),z(\cdot))$ verifies
	\begin{equation*}
		z(T)=\max_{t \in [0,T]} y(t)
	\end{equation*}
with $(x(\cdot),y(\cdot))$ solution of \eqref{sys} for some control $u(\cdot) \in {\cal U}$ that is optimal for problem ${\cal P}$. 
\end{prop}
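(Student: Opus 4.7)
The plan is to show that $\inf {\cal P}_3 = \inf {\cal P}$, with both infima attained, by exploiting how the indicator $\one_{\Rset^+}(z-y)$ in \eqref{diff_incl} implicitly enforces the constraint $z \geq y$ used in ${\cal P}_1$ and ${\cal P}_2$. My strategy avoids a direct existence theory for the (discontinuous) inclusion by importing existence from ${\cal P}$ and constructing an optimum of ${\cal P}_3$ explicitly via Proposition~\ref{prop0}.

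First, I would verify that every $\Pi = (x,y,z) \in {\cal S}_\ell$ satisfies $z(t) \geq y(t)$ for all $t \in [0,T]$. Since $h \geq 0$ regardless of the selector, $z$ is non-decreasing. If $E := \{t : z(t) < y(t)\}$ were non-empty, it would be open by continuity of $z-y$, and on $E$ the indicator vanishes, giving $\dot z = \max(g,0) \geq g = \dot y$ a.e.\ on $E$. Thus $z-y$ is non-decreasing on every connected component of $E$, and running backwards from any $t^\star \in E$ forces $[0,t^\star] \subset E$, contradicting $z(0)=y(0)$---this is the same monotonicity argument used in the proof of Proposition~\ref{prop_mixed_cons}. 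Combined with the monotonicity of $z$, we obtain $z(T) \geq \max_t y(t) \geq \inf {\cal P}$ for every $\Pi \in {\cal S}_\ell$, hence $\inf {\cal P}_3 \geq \inf {\cal P}$.

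For the reverse inequality and attainment, I would first invoke the classical existence result for ${\cal P}$: under Assumption~\ref{assum1}, trajectories of \eqref{sys} are uniformly Lipschitz and uniformly bounded, hence equicontinuous; Assumption~\ref{assum_conv} (convexity of $G$) closes this family under uniform limits via Mazur's lemma, and continuity of $y \mapsto \max_t y(t)$ on $C([0,T])$ yields an optimal $u^\star$ with trajectory $(x^\star,y^\star)$. Given $u^\star$, I would apply the construction of Proposition~\ref{prop0}: define $v^\star$ by \eqref{controlv} and let $z^\star$ be the corresponding solution of the $z$-equation with $z^\star(0)=y_0$. The description in that proof shows $z^\star \geq y^\star$ throughout, so $\one_{\Rset^+}(z^\star - y^\star) \equiv 1$ and the formula $\max(g,0)(1 - v^\star \one_{\Rset^+}(z^\star-y^\star))$ collapses to $\max(g,0)(1-v^\star)$; hence $\Pi^\star := (x^\star,y^\star,z^\star) \in {\cal S}_\ell$, and Proposition~\ref{prop0} yields $z^\star(T) = \max_t y^\star(t) = \inf {\cal P}$. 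This gives $\inf {\cal P}_3 \leq \inf {\cal P}$, attained by $\Pi^\star$.

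Combining everything, $\inf {\cal P}_3 = \inf {\cal P}$. For any optimal $\Pi$ of ${\cal P}_3$, the chain $z(T) \geq \max_t y(t) \geq \inf {\cal P} = z(T)$ forces both inequalities to be equalities, so the associated $u \in {\cal U}$ (extracted as a measurable selector from the convex set-valued map $G$ by standard selection theorems) is optimal for ${\cal P}$ and $z(T) = \max_t y(t)$. The main obstacle I anticipate is the discontinuity of the right-hand side of \eqref{diff_incl} along $\{z=y\}$: the lower-bound step is careful to work only on the open set $E$ where $\one_{\Rset^+}$ is unambiguously zero, but at the construction step one must still check that the feedback-type $v^\star$ from Proposition~\ref{prop0} produces an absolutely continuous $z^\star$ consistent with the right-continuous choice of $\one_{\Rset^+}$, and that the exceptional set $\partial \interior I$ has measure zero, so that $\dot\Pi^\star \in F(\Pi^\star)$ really holds a.e.
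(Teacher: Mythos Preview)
Your argument is correct and reaches the conclusion by a route that differs from the paper's in one significant respect. The paper does not import an existence result for ${\cal P}$ from the outside; instead it introduces an \emph{augmented} set-valued map
\[
F^\dag(\Pi)=\bigcup_{(u,v,\alpha)\in U\times[0,1]^2}\begin{bmatrix} f\\ g\\ (1-\alpha)h+\alpha\max_{w}h(\cdot,w,0)\end{bmatrix},
\]
which is convex-valued, upper semi-continuous and of linear growth, so that the reachable set at time $T$ is compact and a minimizer $\Pi^\star$ exists for the enlarged problem. The paper then sandwiches $z^\star(T)$ between the optimal value of ${\cal P}_2$ (via the inclusion of admissible trajectories of \eqref{sysext} under ${\cal C}_m$ in ${\cal S}_\ell\subset{\cal S}_\ell^\dag$) and $\inf{\cal P}$ (via the same $E$-set argument you use, plus Filippov's Lemma on $G$), invoking Propositions~\ref{prop_state_const} and~\ref{prop_mixed_cons} to close the loop. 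Your approach instead (i) proves existence for ${\cal P}$ directly by Arzel\`a--Ascoli and convexity of $G$, and (ii) builds an explicit optimum of ${\cal P}_3$ from the ${\cal P}$-optimum through the construction of Proposition~\ref{prop0}. This is more self-contained and bypasses the detour through ${\cal P}_1$/${\cal P}_2$ and the auxiliary convexification $F^\dag$; what the paper's route buys is that the $F^\dag$ machinery is reused verbatim in the next proposition on the approximations ${\cal P}_3^\theta$.

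Two small remarks. First, in your lower-bound paragraph the inequality $\max_t y(t)\geq\inf{\cal P}$ already presupposes that the $(x,y)$-component of any $\Pi\in{\cal S}_\ell$ lies in ${\cal S}$; you eventually say this (Filippov on $G$), but it belongs before, not after, that inequality. Second, the worry you flag about $\partial\,\interior I$ is not an obstacle: since the construction in Proposition~\ref{prop0} yields $z^\star\geq y^\star$ everywhere, one has $\one_{\Rset^+}(z^\star-y^\star)\equiv 1$, so $h$ reduces to $\max(g,0)(1-v^\star)$ identically and $\dot\Pi^\star\in F(\Pi^\star)$ holds a.e.\ without any measure-zero caveat.
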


\begin{proof}	
	We fix the initial condition $\Pi(0)=\Pi_0$ and consider the augmented dynamics
	\begin{equation}
	\label{diff_incl_ext}
\dot \Pi \in F^\dag(\Pi):= \bigcup_{(u,v,\alpha) \in U\times[0,1]^2}  
\begin{bmatrix}
f(x,y,u)\\
g(x,y,u)\\
	 h^\dag(x,y,z,u,v,\alpha)
\end{bmatrix}
	\end{equation}
	with
	\begin{equation*}
	 h^\dag(x,y,z,u,v,\alpha)=(1-\alpha)h(x,y,z,u,v)+\alpha\max_{w \in U} h(x,y,z,w,0)
	\end{equation*}
	Under Assumption \ref{assum_conv}, the values of $F^\dag$ are convex compact. One can straightforwardly check that the set-valued map $F^\dag$ is upper semi-continuous\footnote{A set-valued map $F: {\cal X} \leadsto {\cal X}$ is upper semi-continuous at $\xi \in {\cal X}$ if and only if for any neighborhood ${\cal N}$ of $F(\xi)$, there exists $\eta>0$ such that for any $\xi' \in B_{\cal X}(\xi,\eta)$ one has $F(\xi') \subset {\cal N}$ (see for instance \cite{Aubin}).} with linear growth. Therefore, the reachable set ${\cal S}_\ell^\dag(T)$ (where ${\cal S}_\ell^\dag$ denotes the set of absolutely continuous solutions of \eqref{diff_incl_ext} with $\Pi(0)=\Pi_0$) is compact (see for instance \cite[Proposition 3.5.5]{Aubin}). Then, there exists a solution $\Pi^\star(\cdot)=(x^\star(\cdot),y^\star(\cdot),z^\star(\cdot))$ of \eqref{diff_incl_ext} which minimizes $z(T)$. \\
	
	Note that any admissible solution $(x(\cdot),y(\cdot),z(\cdot))$ of system \eqref{sysext} that satisfies the constraint ${\cal C}_m$ 
	belongs to ${\cal S}_\ell \subset {\cal S}_\ell^\dag$. We then get the inequality
	\begin{equation}
		\label{ineq1}
	z^\star(T) \leq \inf\{ z(T); \; (x(\cdot),y(\cdot),z(\cdot)) \mbox{ sol. of \eqref{sysext} with } {\cal C}_m \} .
	\end{equation}
	Let us show that any solution $\Pi(\cdot) =(x(\cdot),y(\cdot),z(\cdot))$ in ${\cal S}_\ell$
	verifies
	\begin{equation}
		\label{ineq_z}
	z(T)\geq \max_{t \in [0,T]} y(t)
	\end{equation}
	 We show that one has $z(t)\geq y(t)$ for any $t \in [0,T]$. We proceed by contradiction, as in the proof of Proposition \ref{prop_mixed_cons}. If the set $E=\{ t \in (0,T); \; z(t)-y(t)<0\}$ is non-empty, one has $\dot z(t)-\dot y(t)\geq 0$ for a.e.~$t \in E$ which implies, by continuity, that one has $z(0)-y(0)<0$ which contradicts the initial condition $z(0)=y(0)$. Moreover, as the map $h$ is non-negative, $z(\cdot)$ is non decreasing and we conclude that \eqref{ineq_z} is verified.
	 
	 On another hand, thanks to Assumptions  \ref{assum1} and \ref{assum_conv}, we can apply Filippov’s Lemma to the set-valued map $G$, which asserts that $(x(\cdot),y(\cdot))$ is solution of \eqref{sys} for a certain $u(\cdot) \in {\cal U}$. With\eqref{ineq_z}, we obtain
	\begin{equation}
		\label{ineq2}
	z^\star(T) \geq \max_{t \in [0,T]} y^\star(t) \geq \inf_{u \in {\cal U}} \left\{ \max_{t \in [0,T]} y(t)  ; \; (x(\cdot),y(\cdot)) \mbox{ sol. of \eqref{sys}} \right\} 
	\end{equation}
	where $(x^\star(\cdot),y^\star(\cdot))$ is solution of \eqref{sys} for a certain $u^\star(\cdot) \in {\cal U}$.
	
	Finally, inequalities \eqref{ineq1} and \eqref{ineq2} with Propositions \ref{prop_state_const} and \ref{prop_mixed_cons} show that
	$z^\star(T)$ is reached by a solution of \eqref{sysext} under the constraint ${\cal C}_m$, and that $u^\star(\cdot)$ is optimal for problem ${\cal P}$. We also conclude that the optimal value $z^\star(T)$ is reached by a solution in ${\cal S}_\ell$, which is thus optimal for problem ${\cal P}_3$.
\end{proof}

\begin{rem}
Let us stress that the function $h$ is not continuous, which does not allow to use Filippov's Lemma for the set valued map $F$. This means that one cannot guarantee a priori that an absolutely continuous solution $\Pi(\cdot)=(x(\cdot),y(\cdot),z(\cdot))$ can be synthesized by a measurable control $(u(\cdot),v(\cdot))$. Proposition \ref{propP3} shows that $(x(\cdot),y(\cdot))$ is indeed a solution of system \eqref{sys} for a measurable control $u(\cdot)$, but one cannot guarantee a priori that $z(\cdot)$ can be generated by a measurable control $v(\cdot)$, what does not matter for our purpose.
\end{rem}

We propose now an approximation from below of the optimal cost with a continuous dynamics. In minimization problems, approximations from below of the optimal value are useful to frame the optimal value of the problem, upper bounds being given by any sub-optimal control of problem ${\cal P}_0$, ${\cal P}_1$, ${\cal P}_2$ or ${\cal P}_3$ (provided typically by a numerical scheme). This will be illustrated in Section \ref{sec_illust}.
Let us consider the family of dynamics parameterized by $\theta >0$
\begin{equation}
    \label{dyn_theta}
    \left\{\begin{array}{l}
\dot x = f(x,y,u)\\
\dot y = g(x,y,u)\\
\dot z = h_\theta(x,y,z,u,v)
\end{array}\right.
\end{equation}
with 
\[
h_{\theta}(x,y,z,u,v)=\max(g(x,y,u),0)(1-v\,e^{-\theta \max(y-z,0)})
\]
(where the expression $e^{-\theta \max(y-z,0)}$ plays the role of an approximation of $\one_{\Rset^+}(z-y)$ when $\theta$ tends to $+\infty$). We then define the family of Mayer problems
\begin{equation*}
	{\cal P}^{\theta}_3: \quad \inf_{\Pi(\cdot) \in {\cal S}_{\theta}} z(T)
\end{equation*}
where ${\cal S}_\theta$ denotes the set of absolutely continuous solutions $\Pi(\cdot)=(x(\cdot),y(\cdot),z(\cdot))$ of \eqref{dyn_theta} for the initial condition $\Pi(0)=\Pi_0$. Let us underline that for these problems with Lipschitz dynamics without constraint, necessary conditions based on Pontryagin Maximum Principle can be derived, leading to shooting methods that are known to very accurate and that could be initialized from numerical solutions of problems ${\cal P}_1$  or ${\cal P}_2$ obtained for instance with direct methods.

\begin{prop} Under Assumption \ref{assum_conv}, for any increasing sequence of numbers $\theta_n$ $(n \in \mathbb{N})$ that tends to $+\infty$, the problem ${\cal P}_3^{\theta_n}$ admits an optimal solution, and for any sequence of optimal solutions  $(x_n(\cdot),y_n(\cdot),z_n)(\cdot))$ of ${\cal P}_3^{\theta_n}$, the sequence $(x_n(\cdot),y_n(\cdot))$ converges, up to  sub-sequence, uniformly to an optimal solution $(x^\star(\cdot),y^\star(\cdot))$ of Problem ${\cal P}$, and its derivatives weakly to $(\dot x^\star(\cdot),\dot y^\star(\cdot))$ in $L_2$. 
Moreover, $z_n(T)$ is an increasing sequence that converges to $\max_{t \in [0,T]} y^\star(t)$.
\end{prop}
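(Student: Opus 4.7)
The plan is to prove: (i) each $\mathcal{P}_3^{\theta_n}$ admits an optimum, (ii) $\{z_n(T)\}$ is non-decreasing and bounded above by $\text{val}(\mathcal{P})$, (iii) along a sub-sequence $(x_n,y_n)\to(x^\star,y^\star)$ uniformly with weak-$L^2$ convergence of derivatives and $y^\star$ admissible for $\mathcal{P}$, and (iv) a quantitative bound on the gap $\max y_n-z_n(T)$ that forces $\lim z_n(T)=\max y^\star=\text{val}(\mathcal{P})$, identifying $(x^\star,y^\star)$ as optimal for $\mathcal{P}$.

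\textbf{Existence, monotonicity and upper bound.} Existence for each $\mathcal{P}_3^{\theta_n}$ follows by the same convexification of the $z$-velocity used for the augmented dynamics $F^\dag$ in the proof of Proposition \ref{propP3}: Assumption \ref{assum_conv} and the linear-growth hypothesis give compactness of reachable sets via \cite[Prop.~3.5.5]{Aubin}, and the infimum is realized within the original dynamics (enlarging the $z$-velocity cannot decrease $z(T)$). For every fixed $u$, the right-hand side $F(t,z,v)=\max(g(t),0)(1-ve^{-\theta(y(t)-z)_+})$ is Lipschitz in $z$ and non-increasing in $v$, so the standard differential-inequality comparison shows that $v\equiv 1$ is optimal. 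The same comparison applied in the parameter $\theta$ (since $F$ is non-decreasing in $\theta$) yields $z_n(T)\le z_{n+1}(T)$. Plugging the optimal control $u^\star$ of $\mathcal{P}$ into $\mathcal{P}_3^{\theta_n}$ with $v\equiv 1$, and noting that the corresponding $z$ stays below the running maximum of $y^\star$ (because $\dot z$ vanishes whenever $z$ equals this running maximum), gives $z_n(T)\le\max_{t\in[0,T]}y^\star(t)=\text{val}(\mathcal{P})$.

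\textbf{Sub-sequential compactness.} Under Assumption \ref{assum1}, the optimal $(x_n,y_n)$ and their derivatives are uniformly bounded on $[0,T]$. Arzelà-Ascoli extracts a sub-sequence with $(x_n,y_n)\to(x^\star,y^\star)$ uniformly; Banach-Alaoglu refines this to $(\dot x_n,\dot y_n)\rightharpoonup(\dot x^\star,\dot y^\star)$ in $L^2$. Under Assumption \ref{assum_conv}, Filippov's lemma applied to $G$ -- exactly as in Proposition \ref{propP3} -- certifies that $(x^\star,y^\star)$ solves \eqref{sys} for some measurable $u^\star\in\mathcal{U}$, hence is admissible for $\mathcal{P}$.

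\textbf{Quantitative gap and conclusion.} This is the main technical step. With the optimal $v_n\equiv 1$, set $\psi_n:=(y_n-z_n)_+$. On each excursion of $\psi_n$ above $0$, where $g_n>0$ a direct computation gives $\dot\psi_n=g_n e^{-\theta_n\psi_n}$ and hence $\tfrac{d}{dt}e^{\theta_n\psi_n}\le\theta_n G_{\max}$, with $G_{\max}$ a uniform upper bound on $|g_n|$ along the trajectories (valid by Assumption \ref{assum1} and Gronwall); where $g_n\le 0$, $\psi_n$ is non-increasing. Since each excursion starts at $\psi_n=0$ by continuity, integration produces the global bound $\psi_n(t)\le\tfrac{1}{\theta_n}\log(1+\theta_n G_{\max} T)$ for every $t\in[0,T]$. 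At any $t_n^\star\in\argmax_{t\in[0,T]}y_n(t)$, monotonicity of $z_n$ gives
\[
z_n(T)\ \ge\ z_n(t_n^\star)\ \ge\ \max_{t\in[0,T]}y_n(t)-\tfrac{1}{\theta_n}\log\bigl(1+\theta_n G_{\max} T\bigr).
\]
Passing to the limit along the sub-sequence (using uniform convergence of $y_n$) yields $\liminf z_n(T)\ge\max_{t\in[0,T]}y^\star(t)$, which combined with $z_n(T)\le\text{val}(\mathcal{P})\le\max_{t\in[0,T]}y^\star(t)$ forces $\lim z_n(T)=\max_{t\in[0,T]}y^\star(t)=\text{val}(\mathcal{P})$ and identifies $(x^\star,y^\star)$ as an optimal trajectory of $\mathcal{P}$. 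The main obstacle is this gap estimate -- in particular, patching the excursion-wise bound so that the $O(\log\theta_n/\theta_n)$ estimate remains uniform on $[0,T]$ when $y_n$ is strongly non-monotone.
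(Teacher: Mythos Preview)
Your proof is correct, and for the crucial lower-bound step it follows a genuinely different route from the paper. The paper argues structurally: it observes that $\mathcal{S}_{\theta_{n+1}}\subset\mathcal{S}_{\theta_n}$ (by re-parameterising $v$), giving monotonicity of $z_n(T)$; then it shows that $F_\theta\to F$ pointwise and invokes a compactness/closure result for differential inclusions with upper semi-continuous right-hand side (Clarke, Th.~3.1.7) so that any uniform limit $\Pi^\star$ of the $\Pi_n$ lies in $\mathcal{S}_\ell$; finally it applies Proposition~\ref{propP3} directly to $\Pi^\star$ to get $z^\star(T)\ge\bar y$, which, combined with the easy upper bound $z_\theta^\star(T)\le\bar y$ (obtained by noting that an optimal trajectory of $\mathcal{P}_3$ already belongs to every $\mathcal{S}_\theta$), closes the argument. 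In contrast, you first reduce to $v\equiv 1$ by a scalar comparison, and then derive the explicit gap estimate $\max_t y_n(t)-z_n(T)\le\theta_n^{-1}\log(1+\theta_n G_{\max}T)$, bypassing the need to identify the limiting inclusion. Your approach is more elementary---it avoids the upper-semicontinuous inclusion machinery---and yields a quantitative convergence rate that the paper does not provide; the paper's approach is slicker in that it recycles Proposition~\ref{propP3} wholesale.

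Regarding the worry you flag at the end: it is not actually an obstacle. On the open set $\{\psi_n>0\}$ (a countable union of intervals each beginning at a point where $\psi_n=0$), the differential inequality $\tfrac{d}{dt}e^{\theta_n\psi_n}\le\theta_n G_{\max}$ holds a.e.\ regardless of the sign of $g_n$ (when $g_n\le 0$ you have $\dot\psi_n\le 0$, hence $\tfrac{d}{dt}e^{\theta_n\psi_n}\le 0$), so integrating from the left endpoint of each component interval gives the bound $\psi_n(t)\le\theta_n^{-1}\log(1+\theta_n G_{\max}T)$ uniformly in $t$, with no patching needed.
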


\begin{proof}
As in the proof of Proposition \ref{propP3}, we consider for any $\theta>0$ the convexified dynamics 
\[
\left\{\begin{array}{l}
\dot x = f(x,y,u)\\
\dot y = g(x,y,u)\\
\dot z = h_\theta^\dag(x,y,z,u,v,\alpha):=(1-\alpha)h_\theta(x,y,z,u,v)+\alpha\max_{w \in U} h_\theta(x,y,z,w,0)
\end{array}\right.
\]
where $\alpha \in [0,1]$. Then, there exists an absolutely continuous solution $(x^\star_\theta(\cdot),y^\star_\theta(\cdot),z^\star_\theta(\cdot))$ with a measurable control $(u^\star_\theta(\cdot),v^\star_\theta(\cdot),\alpha^\star_\theta(\cdot))$ which minimizes $z(T)$. For the control $(u^\star_\theta(\cdot),v^\star_\theta(\cdot),0)$, the solution is given by $(x^\star_\theta(\cdot),y^\star_\theta(\cdot),\tilde z^\star_\theta(\cdot))$ where $\tilde z^\star_\theta(\cdot)$ is solution of the Cauchy problem
\[
\dot z = \tilde l_\theta(t,z):=h_\theta^\dag(x^\star_\theta(t),y^\star_\theta(t),z;u^\star_\theta(t),v^\star_\theta(t),0), \; z(0)=y(0)
\]
while $z^\star_\theta(\cdot)$ is solution of
\[
\dot z = l_\theta(t,z):=h_\theta^\dag(x^\star_\theta(t),y^\star_\theta(t),z,u^\star_\theta(t),v^\star_\theta(t),\alpha^\star_\theta(t)), \; z(0)=y(0)
\]
One can check that the inequality
\[
\tilde l_\theta(t,z) \leq l_\theta(t,z), \quad t \in [0,T], \; z \in \Rset
\]
is fulfilled, 
which gives by comparison of solutions of scalar ordinary differential equations (see for instance \cite{Walter}) the inequality
\[
\tilde z^\star_\theta(t) \leq z^\star_\theta(t), \quad t \in [0,T]
\]
We deduce that $(x^\star_\theta(\cdot),y^\star_\theta(\cdot),z^\star_\theta(\cdot))$ is necessarily a solution of \eqref{dyn_theta}. 

\bigskip

Let
\[
\bar y:= \inf_{u \in {\cal U}} \left\{ \max_{t \in [0,T]} y(t)  ; \; (x(\cdot),y(\cdot)) \mbox{ sol. of \eqref{sys}} \right\} 
\]
By Proposition \ref{propP3}, we know that there exists an optimal solution $(x(\cdot),y(\cdot),z(\cdot))$ of problem ${\cal P}_3$ such that
$z(T)=\bar y$.
Clearly, this solution belongs to ${\cal S}_\theta$ for any $\theta$, and we thus get
\begin{equation}
    \label{upperboundzstartheta}
    z^\star_\theta (T) \leq \bar y
\end{equation}

\medskip

Let 
\[
F_{\theta}(\Pi):= \bigcup_{(u,v) \in U\times[0,1]}  
	\begin{bmatrix}
		f(x,y,u)\\
		g(x,y,u)\\
		h_{\theta}(x,y,z,u,v)
	\end{bmatrix} 
\]
and note that one has
\begin{equation}
    \label{convergenceFtheta}
\lim_{\theta \to +\infty} d\left(F_\theta(\Pi),F(\Pi)\right)=0, \quad \Pi \in {\cal D}\times\Rset
\end{equation}
Consider an increasing sequence of numbers $\theta_n$ ($n \in \mathbb{N}$), and denote $\Pi_n(\cdot)=(x_n(\cdot),y_n(\cdot),z_n(\cdot))$ an optimal solution of problem ${\cal P}_3^{\theta_n}$. Note that one has
\begin{equation}
    \label{Sinclusion}
{\cal S}_{\theta_{n+1}} \subset {\cal S}_{\theta_{n}} \cdots \subset {\cal S}_{\theta_{0}}
\end{equation}
Therefore, the sequence $\dot \Pi_n(\cdot)$ is bounded, and $\Pi_n(\cdot)$ as well. As $F$ is upper semi-continuous, we obtain that $\Pi_n(\cdot)$ converges uniformly on $[0,T]$, up to a sub-sequence, to a certain $\Pi^\star(\cdot)=(x^\star(\cdot),y^\star(\cdot),z^\star(\cdot))$ which belongs to ${\cal S}_l$ (see for instance \cite[Th.~3.1.7]{Clarke}).
From property\eqref{Sinclusion}, we obtain that $z_n(T)$ is a non decreasing sequence that converges to $z^\star(T)$, and from \eqref{upperboundzstartheta}, we get passing at the limit
\[
z^\star(T) \leq \bar y
\]
On another hand, $(x^\star(\cdot),y^\star(\cdot),z^\star(\cdot))$ belongs to ${\cal S}_l$ and we get from Proposition \ref{propP3} the inequality
\[
z^\star(T) \geq \bar y
\]
Therefore, one has $z^\star(T) = \bar y$ and $(x^\star(\cdot),y^\star(\cdot),z^\star(\cdot))$ is then an optimal solution of problem ${\cal P}_3$. From Proposition \ref{propP3}, we obtain that one has necessarily
\[
z^\star(T)=\max_{t \in [0,T]} y^\star(t)
\]
Finally, the sequence $(\dot x_n(\cdot),\dot y_n(\cdot))$ being bounded, it converges, up to a sub-sequence, weakly to $(\dot x^\star(\cdot),\dot y^\star(\cdot))$ in $L_2$ tanks to Alaoglu's Theorem.
\end{proof}

\section{Numerical illustrations}
\label{sec_illust}

We begin by illustrating the different formulations on a problem for which the optimal solution is known.

\subsection{A particular class of dynamics}
	
We consider dynamics of the form
\begin{equation*}
	(\Sigma):
	\left\{\begin{array}{l}
		\dot x= f(x)\\
		\dot y = g(x,u)
	\end{array}\right. \qquad x \in \Rset^n, \; u \in U
\end{equation*}

\begin{prop}\label{particular}
A feedback control $x\mapsto \phi^\star(x)$ such that
\begin{equation*}
	g(x,\phi^\star(x))=\min_{u \in U} g(x,u), \quad x \in \Rset^n
	\end{equation*}	
\end{prop}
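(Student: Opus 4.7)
The statement appears truncated, but in context the claim is plainly that the pointwise-greedy feedback $\phi^\star$ produces an optimal control $u^\star(\cdot) = \phi^\star(x(\cdot))$ for problem $\cal P$ on this class. The plan is to exploit the decoupling: since $\dot x = f(x)$ does not involve the control, the trajectory $x(\cdot)$ issued from $x_0$ is entirely determined by $x_0$ and is the same for every admissible $u(\cdot) \in \mathcal{U}$. Consequently, the problem $\cal P$ reduces to the purely scalar problem of choosing $u(\cdot)$ to minimize $\max_{t\in[0,T]} y(t)$ where $y(t) = y_0 + \int_0^t g(x(s),u(s))\,ds$ and $x(\cdot)$ is a fixed continuous function of time.

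The core step is a pointwise domination argument. First, I would invoke the compactness of $U$ (Assumption \ref{assum1}.i) together with the continuity of $u \mapsto g(x,u)$ to guarantee that the pointwise minimum defining $\phi^\star(x)$ is attained, and then appeal to a measurable selection theorem (for instance Filippov's lemma) to obtain a Borel-measurable $\phi^\star : \Rset^n \to U$ achieving the minimum, so that $t \mapsto \phi^\star(x(t))$ is admissible, i.e.\ lies in $\mathcal{U}$. Denoting by $y^\star(\cdot)$ the resulting $y$-component, by the defining property of $\phi^\star$ we have $g(x(s),\phi^\star(x(s))) \leq g(x(s),u(s))$ for a.e.\ $s \in [0,T]$ and every $u(\cdot) \in \mathcal{U}$, hence
\[
y^\star(t) = y_0 + \int_0^t g(x(s),\phi^\star(x(s)))\,ds \leq y_0 + \int_0^t g(x(s),u(s))\,ds = y(t), \quad t \in [0,T].
\]

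Taking the maximum over $t \in [0,T]$ on both sides immediately yields $\max_{t\in[0,T]} y^\star(t) \leq \max_{t\in[0,T]} y(t)$, and since $u(\cdot)$ was arbitrary, the feedback $\phi^\star$ is optimal for $\cal P$. The only delicate point is the measurability of $t \mapsto \phi^\star(x(t))$; apart from that, the argument is essentially trivial because the decoupling removes any dynamic trade-off between reducing $g$ now versus later. I would expect no real obstacle beyond invoking the measurable selection result cleanly.
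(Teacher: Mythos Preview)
Your proposal is correct and follows essentially the same approach as the paper: both arguments fix the autonomous trajectory $x(\cdot)$, use the pointwise inequality $g(x(s),\phi^\star(x(s)))\leq g(x(s),u(s))$ inside the integral to obtain $y^\star(t)\leq y(t)$ for every $t$, and then take the maximum over $[0,T]$. Your version is slightly more careful in explicitly invoking a measurable selection theorem to justify that $t\mapsto\phi^\star(x(t))$ lies in $\mathcal{U}$, a point the paper leaves implicit.
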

is optimal for problem ${\cal P}$.

\begin{proof}
	For a given $x_0$ in $\Rset^n$, let $x(\cdot)$ be the solution of $\dot x=f(x)$, $x(0)=x_0$ independently to the control $u(\cdot)$. Then, for any solution $y(\cdot)$, one has
	\[
	y(t)= y(0)+ \int_0^t g(x(\tau),u(\tau))\,d\tau \geq y(0)+ \int_0^t \min_{v \in U} g(x(\tau),v)\,d\tau , \quad t \geq 0
	\]
	Let $y^\star(\cdot)$ be defined as
	\[
	y^\star(t):= y(0)+ \int_0^t \min_{v \in U} g(x(\tau),v)\,d\tau , \quad t \geq 0
	\]
	Clearly, one has
	\[
	\max_t y(t) \geq \max_t y^\star(t)
	\]
	where $y^\star(\cdot)$ is a solution of $\Sigma$ for any measurable control $u^\star(\cdot)$ such that
	\[
	g(x(t),u^\star(t))=\min_{v \in V} g(x(t),v), \quad \mbox{a.e. } t \geq 0
	\]
	We conclude that $y^\star(\cdot)$ is an optimal trajectory of problem ${\cal P}$ for the control generated by the feedback $\phi^\star$.
\end{proof}

\medskip

As a toy example, we have considered the system
\begin{equation*}
	\left\{\begin{array}{l}
		\dot x=1, \; x(0)=0\\
		\dot y =(1-x)(2-x)(4-x)(1+u/2), \; y(0)=0  
	\end{array}\right. \qquad u \in [-1,1]
\end{equation*}
for which
\begin{equation*}
\phi^\star(x)=-\sign\Big( (1-x)(2-x)(4-x) \Big)
\end{equation*}
is an optimal control which minimizes $\max_{t \in [0,T]} y(t)$.
Remark that this problem can be equivalently written with a scalar non-autonomous dynamics
\[
\dot y = (1-t)(2-t)(4-t)(1+u/2)
\]
for which the open-loop control
\[
u^\star(t)=-\sign\Big( (1-t)(2-t)(4-t) \Big)
\]
is optimal.\\

For $T=5$, we have first computed the exact optimal solution of problem ${\cal P}$ with the open-loop $u^\star(\cdot)$, by integrating the dynamics with Scipy in Python software (see Figure \ref{figsolexacte}).
\begin{figure}[ht!]
	\begin{center}
		\includegraphics[width=0.45\textwidth]{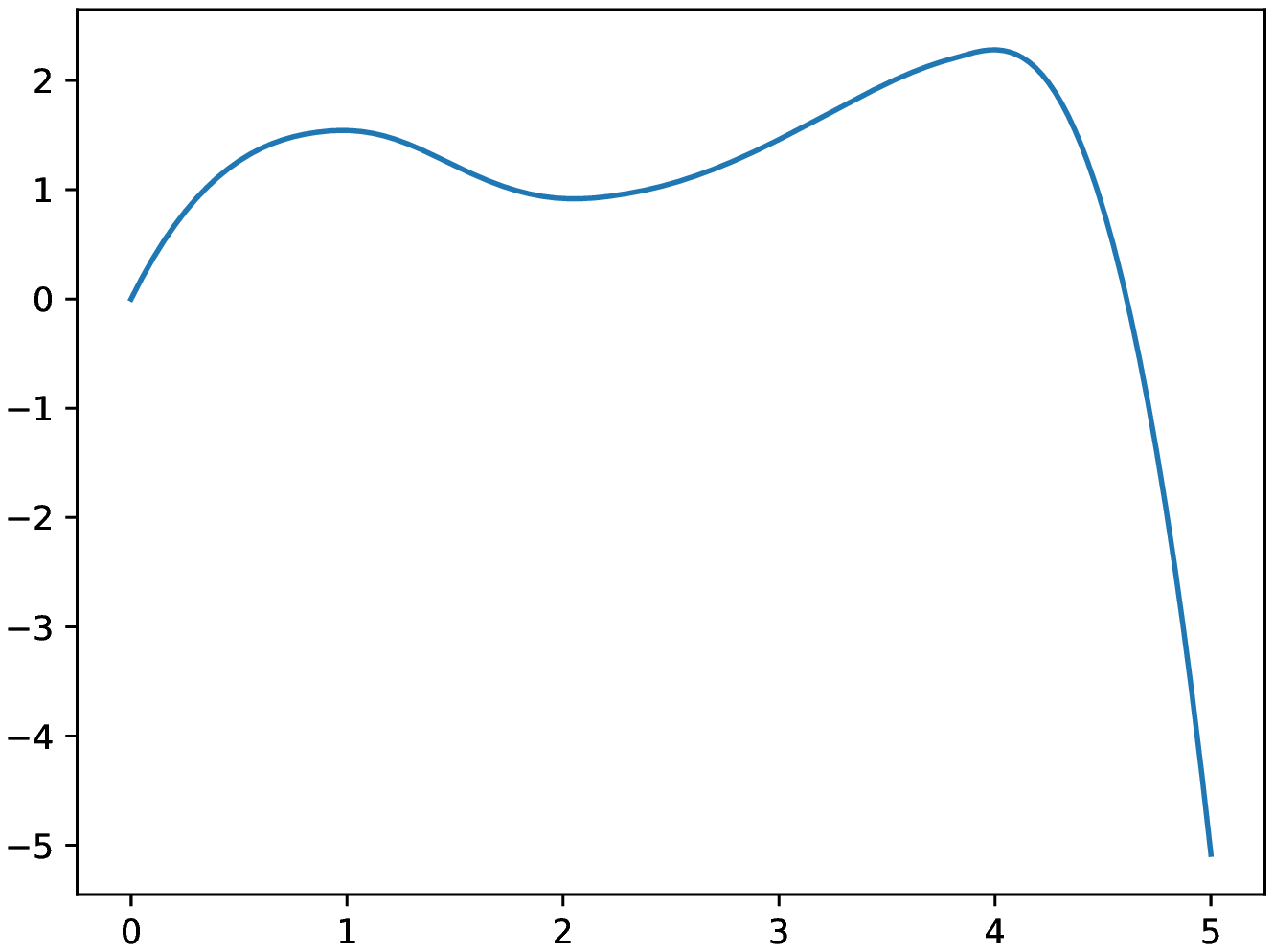}
		\includegraphics[width=0.45\textwidth]{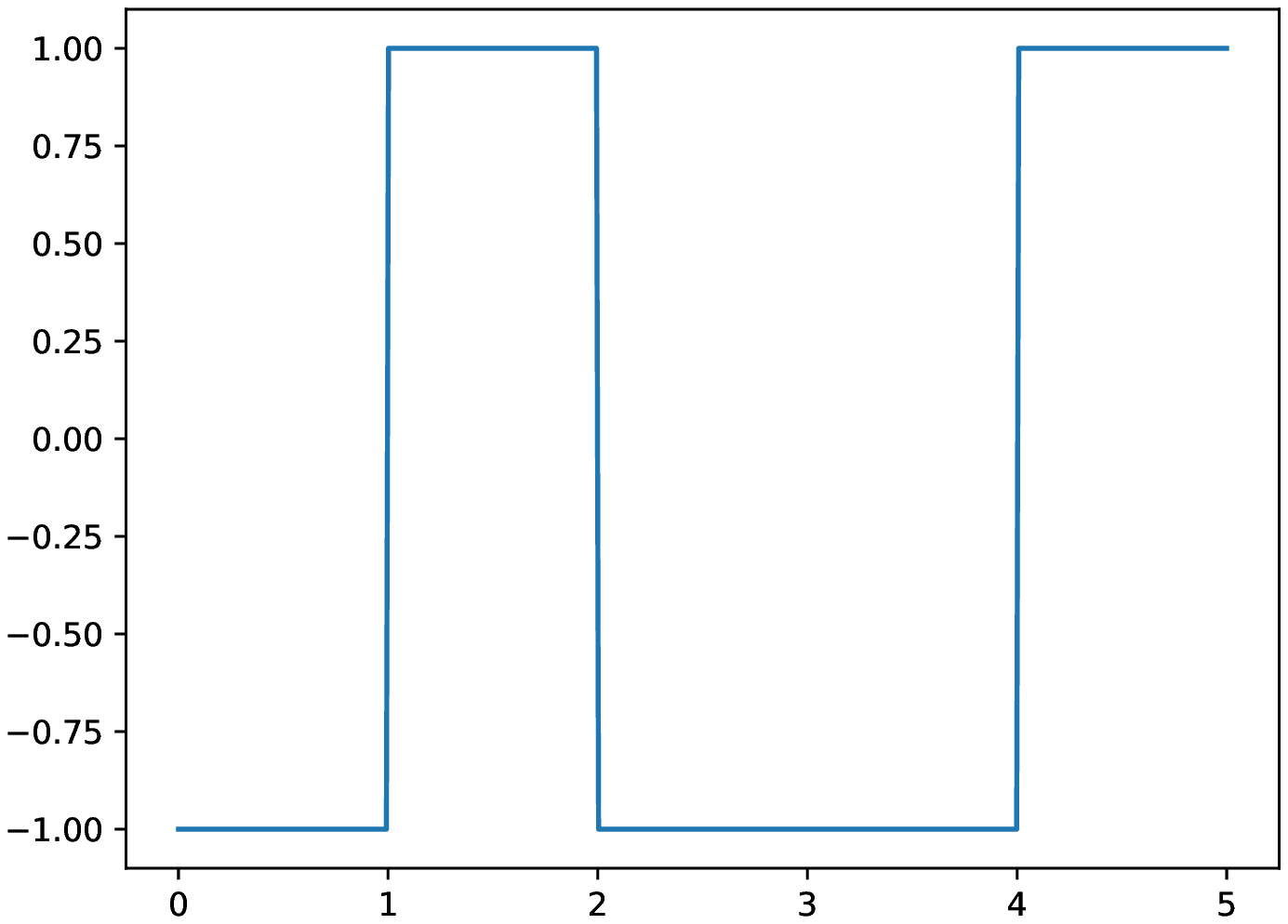}
	\end{center}
	\caption{Optimal solution: $y(\cdot)$ on the left, $u^\star(\cdot)$ on the right}
	\label{figsolexacte}
\end{figure}
Impacts of perturbations on the switching times on the criterion are presented in Table \ref{tablesens}, which show a quite high sensitivity of the optimal control for this problem.
\begin{table}
	\begin{center}
		\begin{tabular}{c|c|c}
			disturbance & $\displaystyle \max_{t\in[0,T]} y(t)$ & error \\ \hline \hline
			$0$ &  $2.24985$ & $0$ \\
			$0.001\%$ & $2.24985$ & $4.10^{-6}\%$ \\
			$0.01\%$ &  $2.25010$ & $0.01\%$ \\
		    $0.1\%$ & $2.69457$	& $20\%$
		\end{tabular}
		\caption{Sensitivity to the optimal switching}
		\label{tablesens}
	\end{center}
\end{table}
Then, we have solved numerically problems ${\cal P}_0$ to ${\cal P}_2$ with a direct method (Bocop software using Gauss II integration scheme) for $500$ time steps and an optimization relative tolerance equal to $10^{-10}$. For problem ${\cal P}_3$, as the dynamics is not continuous, direct methods do not work well and we have used instead a numerical scheme based on dynamic programming (BocopHJB software) with $500$ time steps and a discretization of $200\times 200$ points of the state space. For the additional control $v$, we have considered only two possible values $0$ and $1$ as we know that the optimal solution is reached for $v \in \{0,1\}$ (see Proposition \ref{prop0}).
The numerical results and computation times are summarized in Table \ref{tableresults}, while Figure \ref{figcompar} presents the corresponding trajectories.

\begin{table}
	\begin{center}
	\begin{tabular}{c|c|c|c}
		problem & $\displaystyle \max_{t\in[0,T]} y(t)$ & error & computation time \\ \hline\hline
		${\cal P}$ &  $2.24705$ & 0 & $-$\\ 
		${\cal P}_0$ &  $2.249888$ & $0.126\%$ & $0.5\, s$\\
		${\cal P}_1$ &  $2.24998$ & $0.130\%$ & $1.8\, s$\\
		${\cal P}_2$ &  $2.249941$ & $0.129\%$ & $3.8\, s$\\
			${\cal P}_3$ &  $2.26778$ & $0.8\%$ & $248\, s$
		
	\end{tabular}
\caption{Comparison of the numerical results}
	\label{tableresults}
	\end{center}
\end{table}

We note that the direct method give very accurate results, and the computation time for problem ${\cal P}_0$ is the lowest because it has only one control. The computation time for problem ${\cal P}_2$ is slightly higher than for ${\cal P}_1$ because the mixed constraint ${\cal C}_m$ is heavier to evaluate. The numerical method for problem ${\cal P}_3$ is of completely different nature as it computes the optimal solution for all the initial conditions on the grid, which explains a much longer computation time. The accuracy of the results is also directly related to the size of the discretization grid and can be improved by increasing this size but at the price of a longer computation time.
\begin{figure}[ht!]
	\begin{center}
		\includegraphics[width=0.45\textwidth]{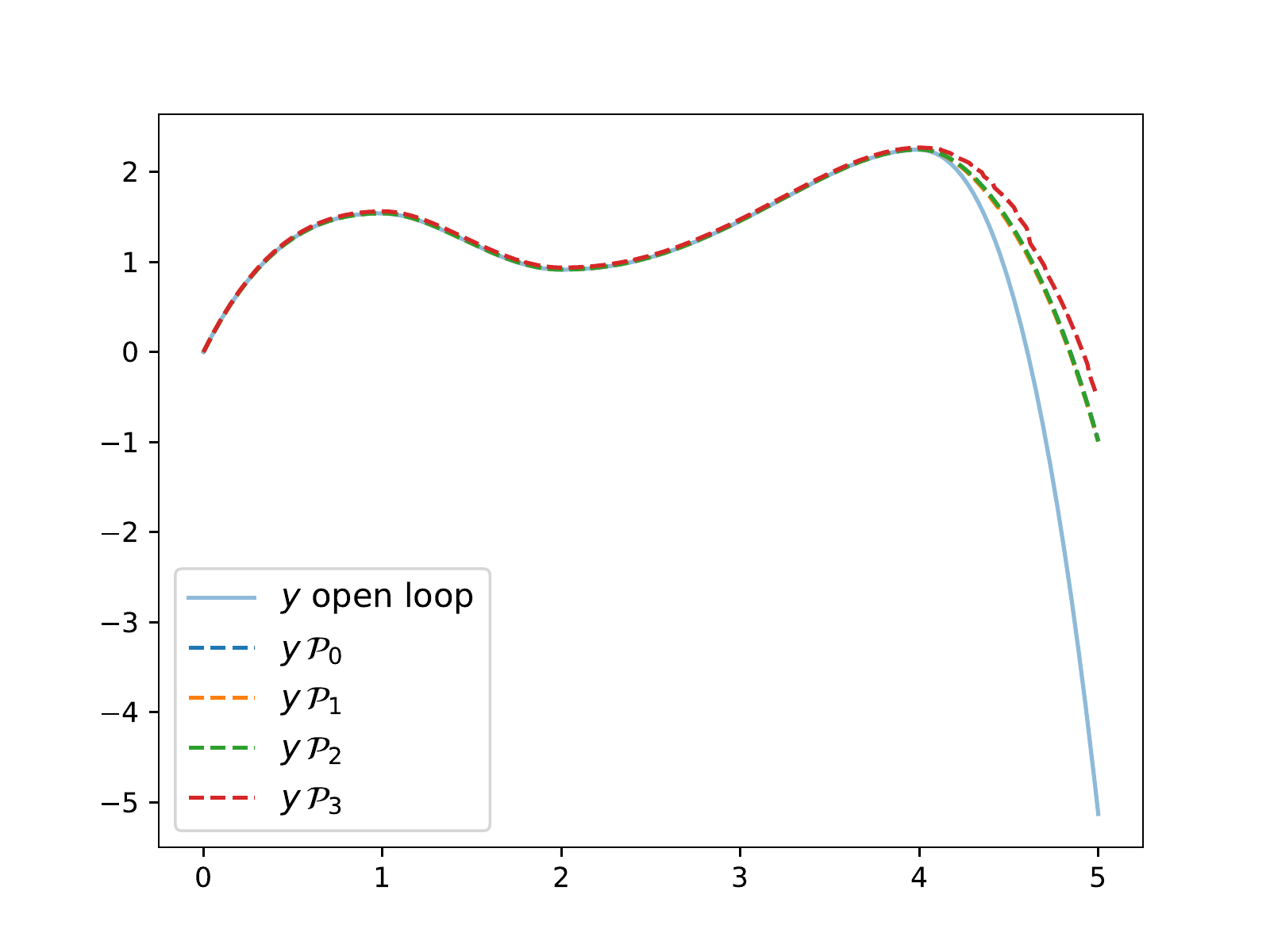}
		\includegraphics[width=0.45\textwidth]{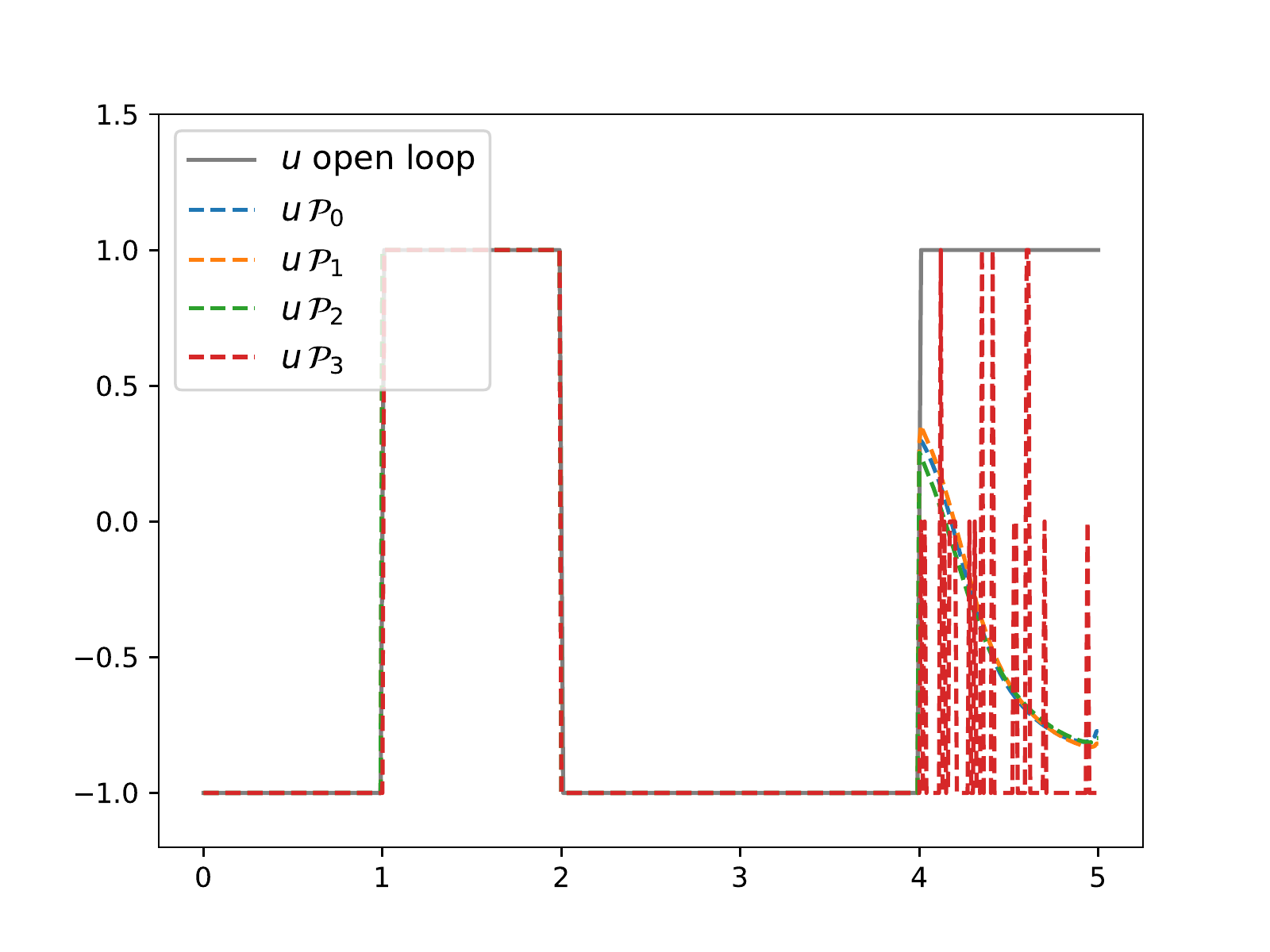}
		\includegraphics[width=0.45\textwidth]{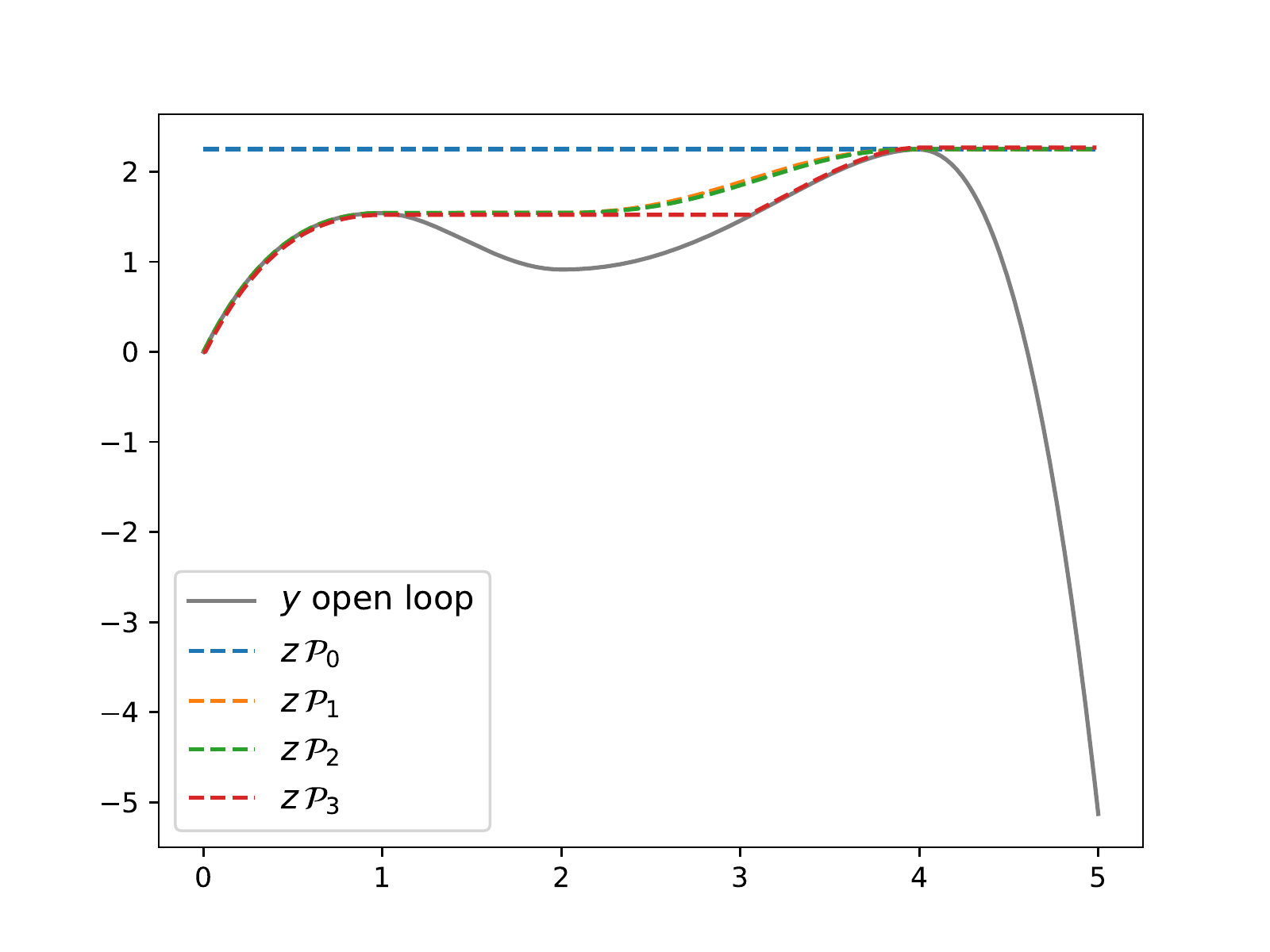}
		\includegraphics[width=0.45\textwidth]{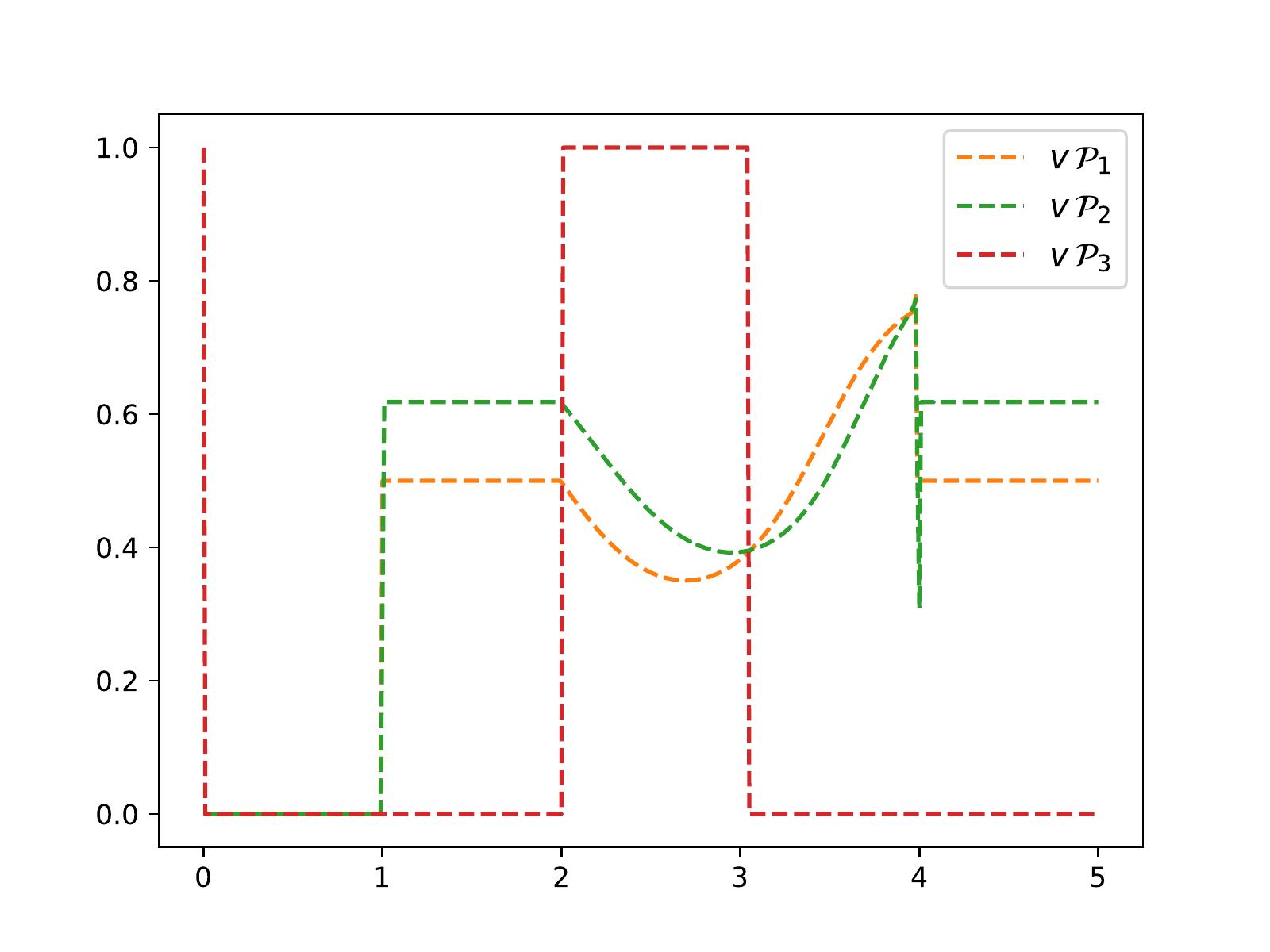}
	\end{center}
	\caption{Comparisons of the three methods on $y(\cdot)$, $u(\cdot)$, $z(\cdot)$ and $v(\cdot)$.}
	\label{figcompar}
\end{figure}

\medskip

On Figure \ref{figcompar}, one may notice some difference between the obtained trajectories. Let us underline that after the peak of $y(\cdot)$, there is no longer uniqueness of the optimal control.

\subsection{Application to an epidemiological model}

The SIR model is one of the most basic transmission model in epidemiology for a directly transmitted infectious disease (for a complete introduction, see for instance \cite{SIR}) and it retakes great importance nowadays due to covid-19 epidemic. 

Consider on a time horizon $[0,T]$ variables $S(t)$, $I(t)$ and $R(t)$ representing the fraction of susceptible, infected and recovery individuals at time $t\in[0,T]$, so that one has $S(t)+I(t)+R(t)=1$ with $S(t),I(t),R(t)\ge 0$. Let $\beta > 0$ be the rate of transmission and $\gamma >0$ the recovery rate. Interventions as lock-downs and curfew are modeled as a factor in rate transmission that we denote $u$ and which represents our control variable taking values in $[0,u_{max}]$ with $u_{max} \in (0,1)$, where $u=0$ means no intervention and $u=u_{max}$ the most restrictive one which reduces as much as possible contacts among population. The SIR dynamics including the control is then given by the following equations:
\begin{align}
	& \dot{S}=-(1-u)\beta SI\\
	& \dot{I}=\,(1-u)\beta SI-\gamma I\\
	& \dot{R}=\gamma I
\end{align}
When the reproduction number ${\cal R}_0=\beta/\gamma$ is above one and the initial proportion of susceptible is above the herd immunity threshold ${\cal R}_0^{-1}$, it is well known that there is an epidemic outbreak. Then, the objective is to minimize the peak of the infected population
\[
\max_{t \in [0,T]} I(t)
\]
with respect to control $u(\cdot)$ subject to a $L_1$ budget
\begin{equation}
\label{budget}
	\int_{0}^{T}u(t)\le Q
\end{equation}
on a given time interval $[0,T]$ where $T$ is in general chosen large enough to ensure the herd immunity of the population is reached at date $T$. Note that one can drop the $R$ dynamics to study this problem. If the constraint \eqref{budget} were not imposed, then the optimal solution would be the trivial control $u(t)=u_{max}, \; t\in[0,T]$, which is in general unrealistic from a operational point of view. A similar problem has been considered in \cite{Morris} but under the constraint that intervention occurs only once on a time interval of given length, that we relax here. Note that the constraint \eqref{budget} can be reformulated as a target condition, considering the augmented dynamics
\begin{align}
	& \dot{S}=-(1-u)\beta SI\\
	& \dot{I}=\,(1-u)\beta SI-\gamma I\\
	& \dot{C}=-u(t)
\end{align}
with initial condition $C(0)=Q$ and target $\{ C \geq 0 \}$.
Extension of the results of Sections \ref{sec_constraint} and \ref{sec_noconstraint} to problems with target do not present any particular difficulty, and is left to the reader.

The parameters considered for the numerical simulations are given in Table \ref{tablepar}. 
\begin{table}[ht!]
	\begin{center}
		\begin{tabular}{l|l|l|l|l|l}
			$\beta$ &  $\gamma$ & $T$ &  $Q$ & $S(0)$ & $I(0)$\\
			\hline\hline
			$0.21$  & $0.07$ &  $300$ & $28$ & $1-10^{-6}$ & $10^{-6}$
		\end{tabular}
		\caption{SIR parameters considered in numerical computations}
		\label{tablepar}
	\end{center}
\end{table} 
 Adding the $z$-variable, we end up with a dynamics in dimension four, which is numerically heavier than for the previous example. In particular, methods based on the value function are too time consuming to obtain accurate results for refined grids in a reasonable computation time. So we have considered direct methods only. We do not consider here problem ${\cal P}_3$, but instead its regular approximations ${\cal P}_3^\theta$ suitable to direct methods. For direct methods that use algebraic differentiation of the dynamics, convergence and accuracy are much better if one provides differentiable dynamics.
This is why we have approximated the $\max(\cdot,0)$ operator for problems ${\cal P}_1$ and ${\cal P}_2$ by the Laplace formula
\[
\dfrac{\log\left(e^{\lambda \xi}+1\right)}{\lambda} \underset{\lambda \to +\infty}{\longrightarrow} \max(\xi,0), \quad \xi \in \Rset
\]
with $\lambda=100$ for the numerical experiments.
For problem ${\cal P}_3^\theta$, one has to be careful about the interplay between the approximations of $\max(\cdot,0)$ and the sequence $\theta_n \to +\infty$, to provide approximations from below of the optimal value.
The function $h_{\theta}$ is thus approximated by the expression
\begin{equation*}
h_\theta(x,y,z,u,v) \simeq			\dfrac{\log\left(e^{\lambda_1 g(x,y,u)}+1\right)}{\lambda_1}\left(1-v e^{\frac{\theta}{\lambda_2} \log\left(e^{\lambda_2 (y-z)}+1\right)}\right)
\end{equation*}
which depends on three parameters $\lambda_1$, $\lambda_2$ and $\theta$.
Posit for convenience
\[
\alpha:=\frac{\theta}{\lambda_2}
\]
and consider the function
$$
\omega_{\alpha,\lambda_2}(\xi):=e^{-\alpha \log\left(e^{-\lambda_2 \xi}+1\right)}, \quad \xi \in \Rset
$$
which approximates the indicator function $\one_{\Rset^+}$. 
One has the following properties.

\begin{lem}\label{lem:f}
$ $
\begin{enumerate}
    \item For any positive numbers $\alpha$, $\lambda_2$, the function $\omega_{\alpha,\lambda_2}$ is increasing with
    \[
    \lim_{\xi\rightarrow-\infty} \omega_{\alpha,\lambda_2}(\xi)=0, \quad  \lim_{\xi\rightarrow+\infty} \omega_{\alpha,\lambda_2}(\xi)=1
    \]
    \item For any $\varepsilon\in(0,1)$, one has $\omega_{\alpha,\lambda_2}\left(-\varepsilon^2\right)=\varepsilon$ and $\omega_{\alpha,\lambda_2}(0)=1-\varepsilon$ exactly  for
    \begin{equation}
        \label{alphalambda2}
    \alpha=-\frac{\log(1-\varepsilon)}{\log(2)}, \quad  \lambda_2=\frac{\log(\varepsilon^{-\frac{1}{\alpha}}-1)}{\varepsilon^2}
    \end{equation}
\end{enumerate}
\end{lem}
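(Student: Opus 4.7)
The plan is to rewrite $\omega_{\alpha,\lambda_2}$ in a more tractable form and then derive both statements by direct computation. Observe that
\[
\omega_{\alpha,\lambda_2}(\xi)=e^{-\alpha\log(e^{-\lambda_2\xi}+1)}=\bigl(1+e^{-\lambda_2\xi}\bigr)^{-\alpha},
\]
so the function is, up to a power, the classical sigmoid-type map $\xi\mapsto (1+e^{-\lambda_2\xi})^{-1}$.

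For part 1, I would note that since $\lambda_2>0$, the map $\xi\mapsto e^{-\lambda_2\xi}$ is strictly decreasing and positive, hence $\xi\mapsto 1+e^{-\lambda_2\xi}$ is strictly decreasing with values in $(1,+\infty)$ for $\xi<0$ and $(0,1)$ for $\xi>0$, tending to $+\infty$ as $\xi\to-\infty$ and to $1$ as $\xi\to+\infty$. Raising to the power $-\alpha$ with $\alpha>0$ reverses monotonicity, so $\omega_{\alpha,\lambda_2}$ is strictly increasing, with $\omega_{\alpha,\lambda_2}(\xi)\to 0$ as $\xi\to-\infty$ and $\omega_{\alpha,\lambda_2}(\xi)\to 1$ as $\xi\to+\infty$. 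This part is a one-line verification once the rewriting above is in place.

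For part 2, I would impose the two prescribed values and solve the system in $(\alpha,\lambda_2)$. At $\xi=0$ one has $\omega_{\alpha,\lambda_2}(0)=2^{-\alpha}$, so the condition $\omega_{\alpha,\lambda_2}(0)=1-\varepsilon$ reads $2^{-\alpha}=1-\varepsilon$, which gives directly
\[
\alpha=-\frac{\log(1-\varepsilon)}{\log 2},
\]
matching the first formula in \eqref{alphalambda2}; since $\varepsilon\in(0,1)$ this $\alpha$ is positive. At $\xi=-\varepsilon^2$, the condition $\omega_{\alpha,\lambda_2}(-\varepsilon^2)=\varepsilon$ becomes $(1+e^{\lambda_2\varepsilon^2})^{-\alpha}=\varepsilon$, equivalently $1+e^{\lambda_2\varepsilon^2}=\varepsilon^{-1/\alpha}$, so
\[
\lambda_2=\frac{\log\!\bigl(\varepsilon^{-1/\alpha}-1\bigr)}{\varepsilon^2},
\]
which is the second formula in \eqref{alphalambda2}. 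The only check worth making is that $\varepsilon^{-1/\alpha}>1$ so that the logarithm and hence $\lambda_2$ are well defined and positive; this is immediate since $\varepsilon<1$ and $\alpha>0$.

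I do not expect any real obstacle: both items reduce to elementary algebraic manipulations on the explicit form $(1+e^{-\lambda_2\xi})^{-\alpha}$, and the only minor care needed is to record that the solutions $\alpha,\lambda_2$ extracted in part 2 are indeed positive, so that part 1 applies to them and $\omega_{\alpha,\lambda_2}$ is genuinely the increasing approximation of $\one_{\Rset^+}$ that the paragraph preceding the lemma advertises.
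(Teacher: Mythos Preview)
Your proof is correct and follows essentially the same elementary route as the paper: the paper computes $\omega'_{\alpha,\lambda_2}$ explicitly to get monotonicity and then reads off the limits and the algebraic formulas, while you obtain the same conclusions from the closed form $(1+e^{-\lambda_2\xi})^{-\alpha}$. Two minor slips worth fixing: for $\xi>0$ the quantity $1+e^{-\lambda_2\xi}$ lies in $(1,2)$, not $(0,1)$ (this does not affect your argument), and your final remark that $\lambda_2>0$ follows from $\varepsilon^{-1/\alpha}>1$ is not quite right---positivity needs $\varepsilon^{-1/\alpha}>2$, which holds only for $\varepsilon<1/2$; the lemma itself, however, does not assert $\lambda_2>0$, so this does not affect the proof.
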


\begin{proof}
One has first
$$
\omega'_{\alpha,\lambda_2}(\xi)=\lambda_2 \alpha\dfrac{e^{-\lambda_2 x}}{e^{-\lambda_2\xi}+1}\omega_{\alpha,\lambda_2}(\xi)>0
$$ 
and the function $\omega_{\alpha,\lambda_2}(\cdot)$ is thus increasing. From
$$
\lim_{\xi\rightarrow -\infty} -\alpha\log(e^{-\lambda_2 \xi}+1)= -\infty
$$
one get
$$
\lim_{\xi\rightarrow-\infty} \omega_{\alpha,\lambda_2}(\xi)=0
$$
and similarly
$$
\lim_{\xi\rightarrow +\infty}-\alpha\log(e^{-\lambda_2 \xi}+1)=0
$$
implies
$$
\lim_{\xi\rightarrow+\infty} \omega_{\alpha,\lambda_2}(\xi)=1
$$
Finally, with simple algebraic manipulation of the conditions $\omega_{\alpha,\lambda_2}\left(-\varepsilon^2\right)=\varepsilon$ and $\omega_{\alpha,\lambda_2}(0)=1-\varepsilon$, one obtains straightforwardly the expressions \eqref{alphalambda2}.
\end{proof}
We have taken $\lambda_1=5000$ and considered a sequence of approximations of the indicator function for the values given in Table \ref{tab:alphalambda} according to expressions \eqref{alphalambda2} of Lemma \ref{lem:f} (see Figure \ref{fig:ind}).
\begin{table}[ht!]
    \centering
    \begin{tabular}{l|l|l}
       $\varepsilon$  &  $\alpha$ & $\lambda_2$\\
       \hline\hline
        0.2     & 0.32 & 124\\
        0.15 & 0.234 & 360\\
        0.1 & 0.152 & 1514\\
        0.075 & 0.112 & 4094\\
        0.05 & 0.074 & 16193
    \end{tabular}
    \caption{Values of parameters $\alpha$, $\lambda_2$ for different $\varepsilon$}
    \label{tab:alphalambda}
\end{table}
\begin{figure}[ht!]
	\begin{center}
		\includegraphics[width=0.45\textwidth]{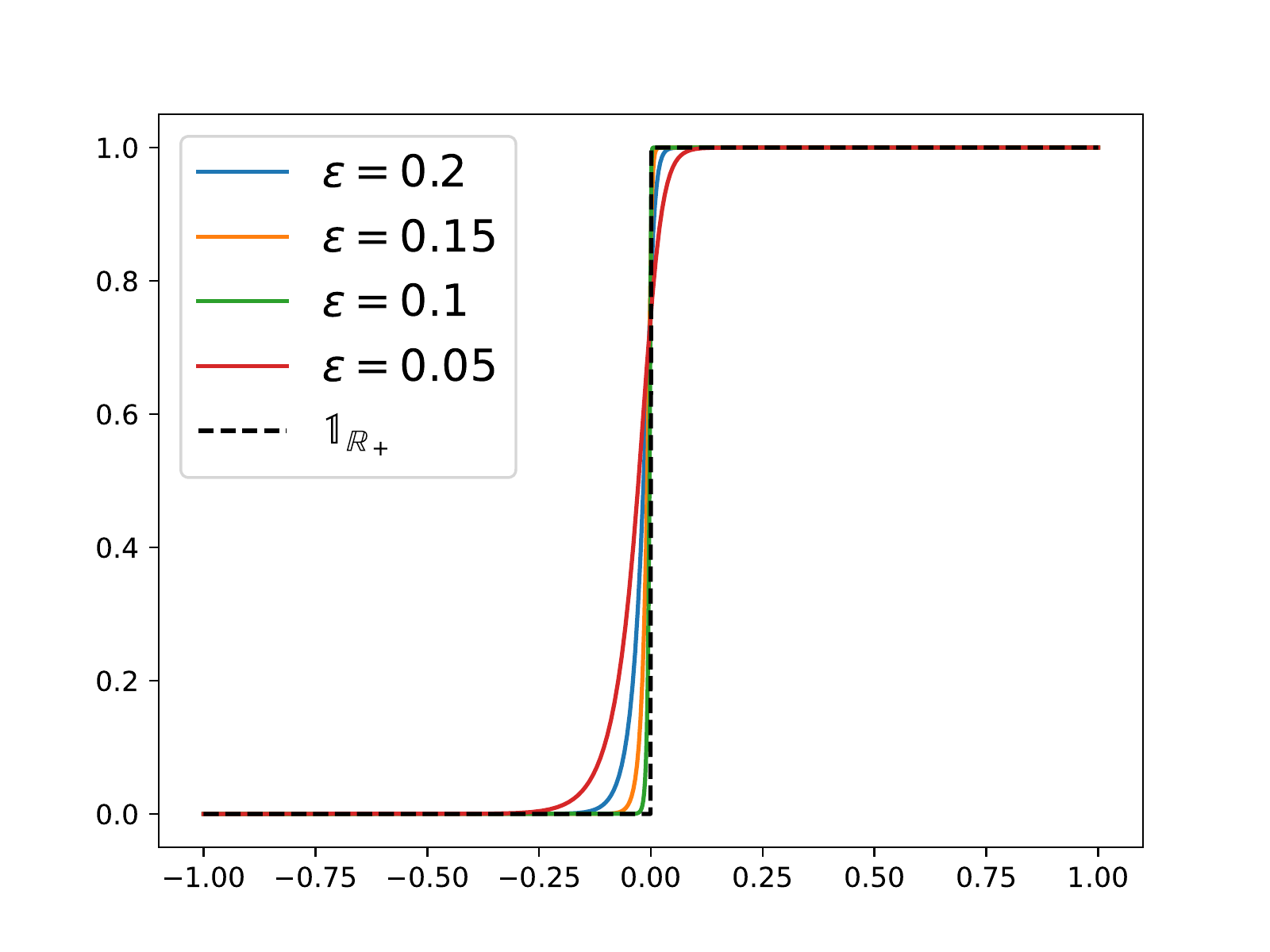}
		\includegraphics[width=0.45\textwidth]{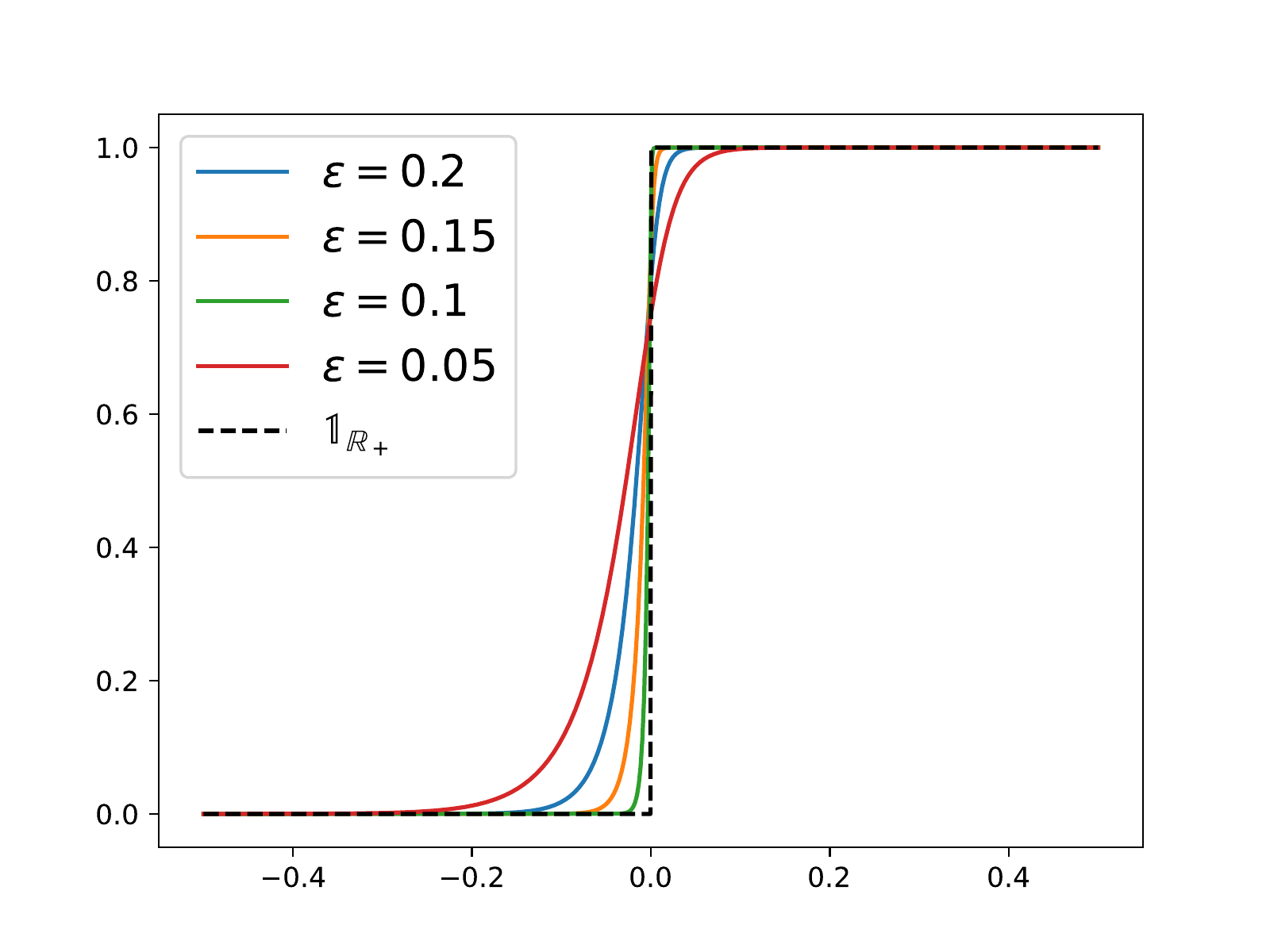}
		
	\end{center}
	\caption{Approximation of the indicator function with different values of $\varepsilon$ (zoom on the abscissa axis on the right)}
	\label{fig:ind}
\end{figure}

Computations have been performed with Bocop software on a standard laptop computer (with a Gauss II integration scheme, $600$ time steps and relative tolerance $10^{-10}$).
As one can see in Figure \ref{fig:comparSIR124} and Table \ref{table:results2} problems ${\cal P}_0$, ${\cal P}_1$, ${\cal P}_2$ present similar performances for peak values and computation time.
\begin{figure}[ht!]
	\begin{center}
		\includegraphics[width=0.45\textwidth]{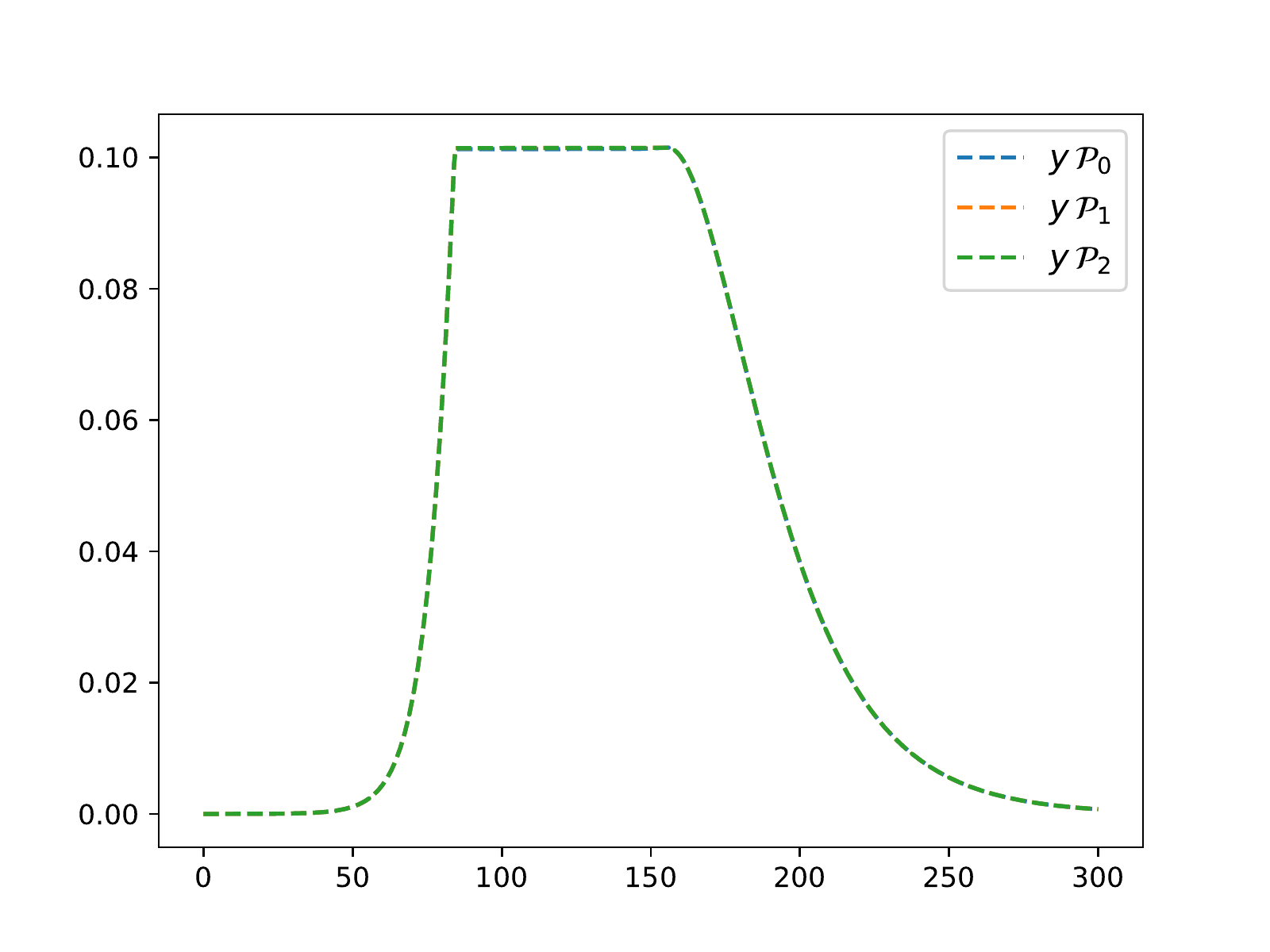}
		\includegraphics[width=0.45\textwidth]{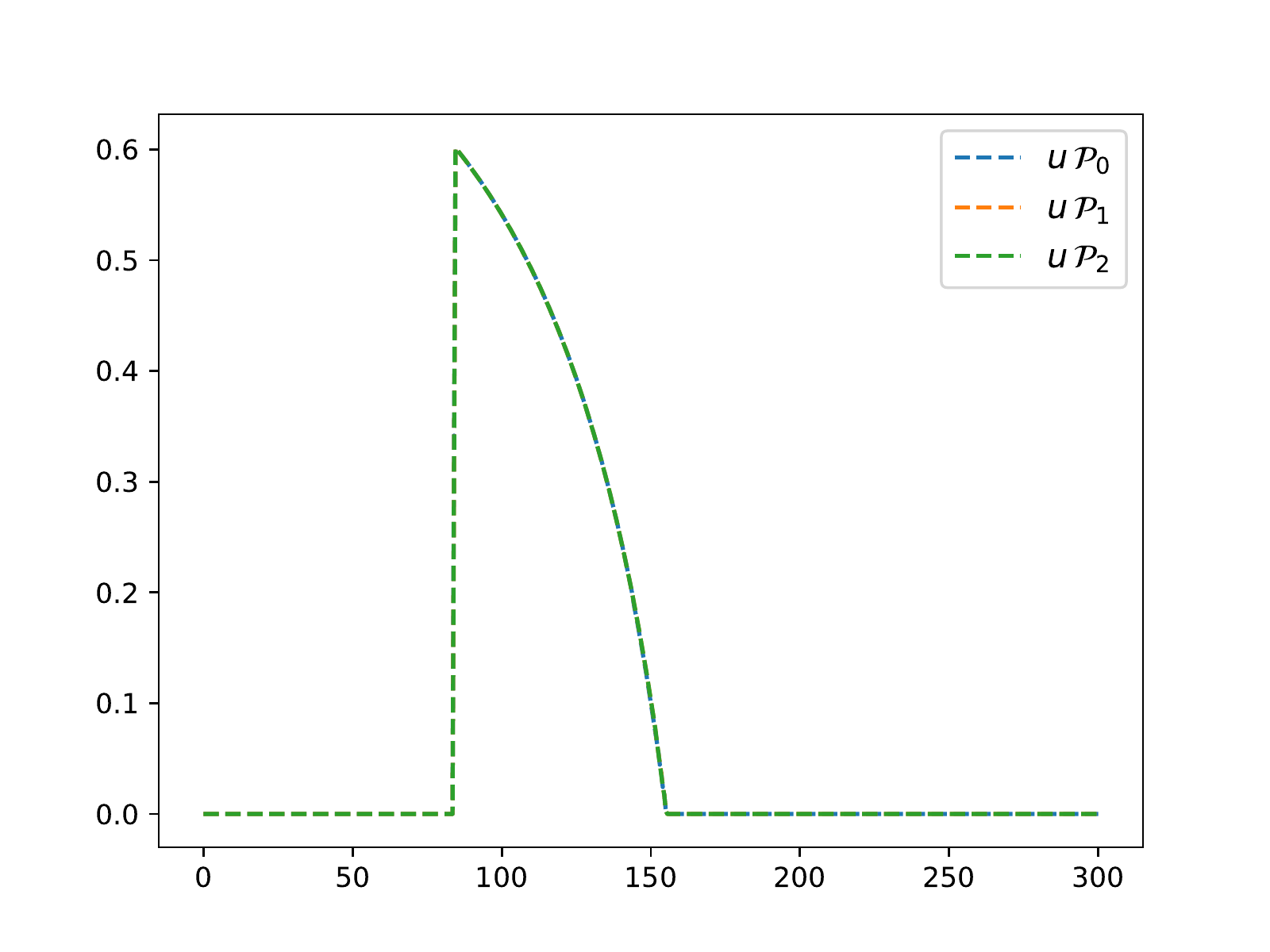}
		\includegraphics[width=0.45\textwidth]{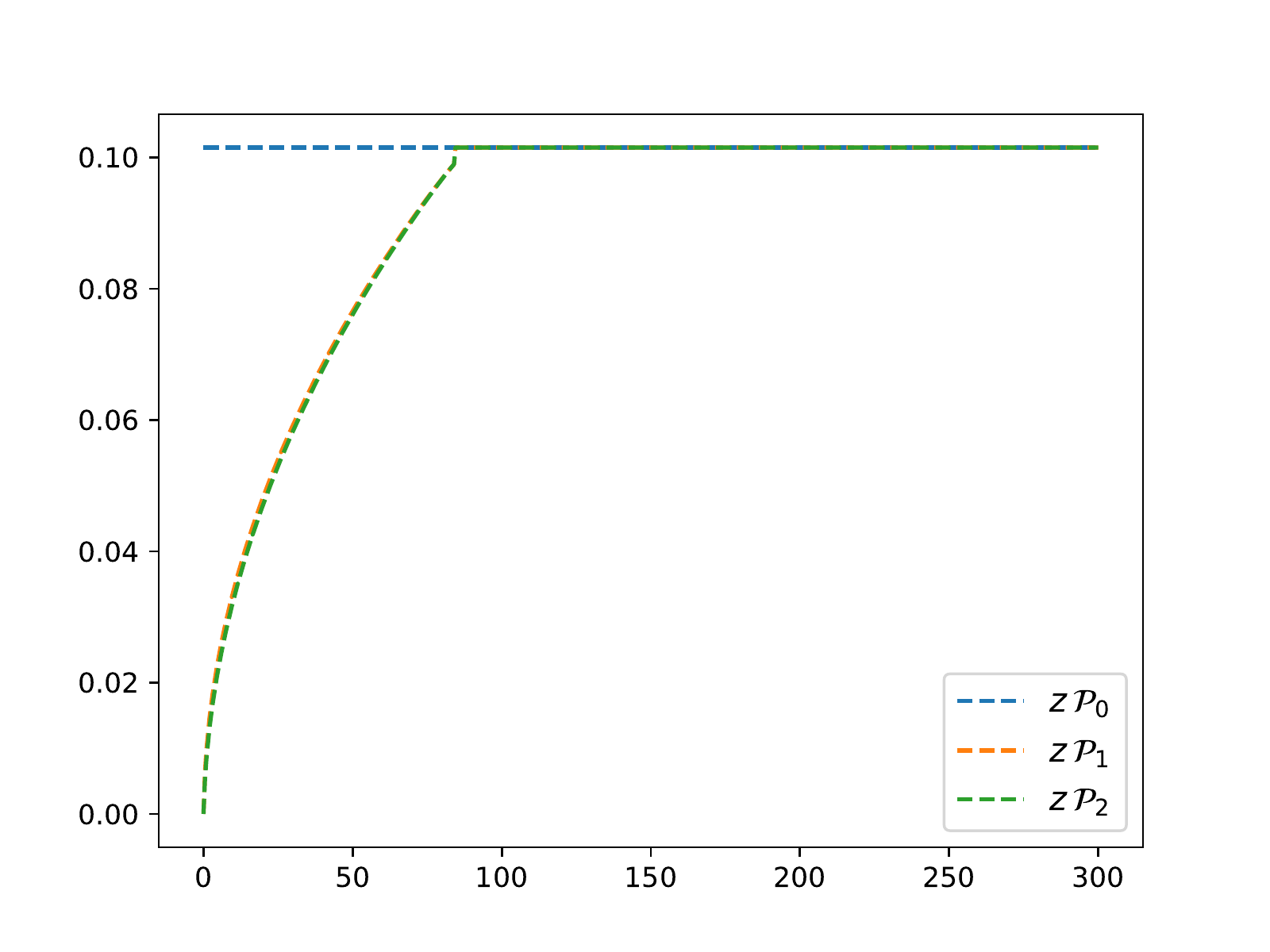}
		\includegraphics[width=0.45\textwidth]{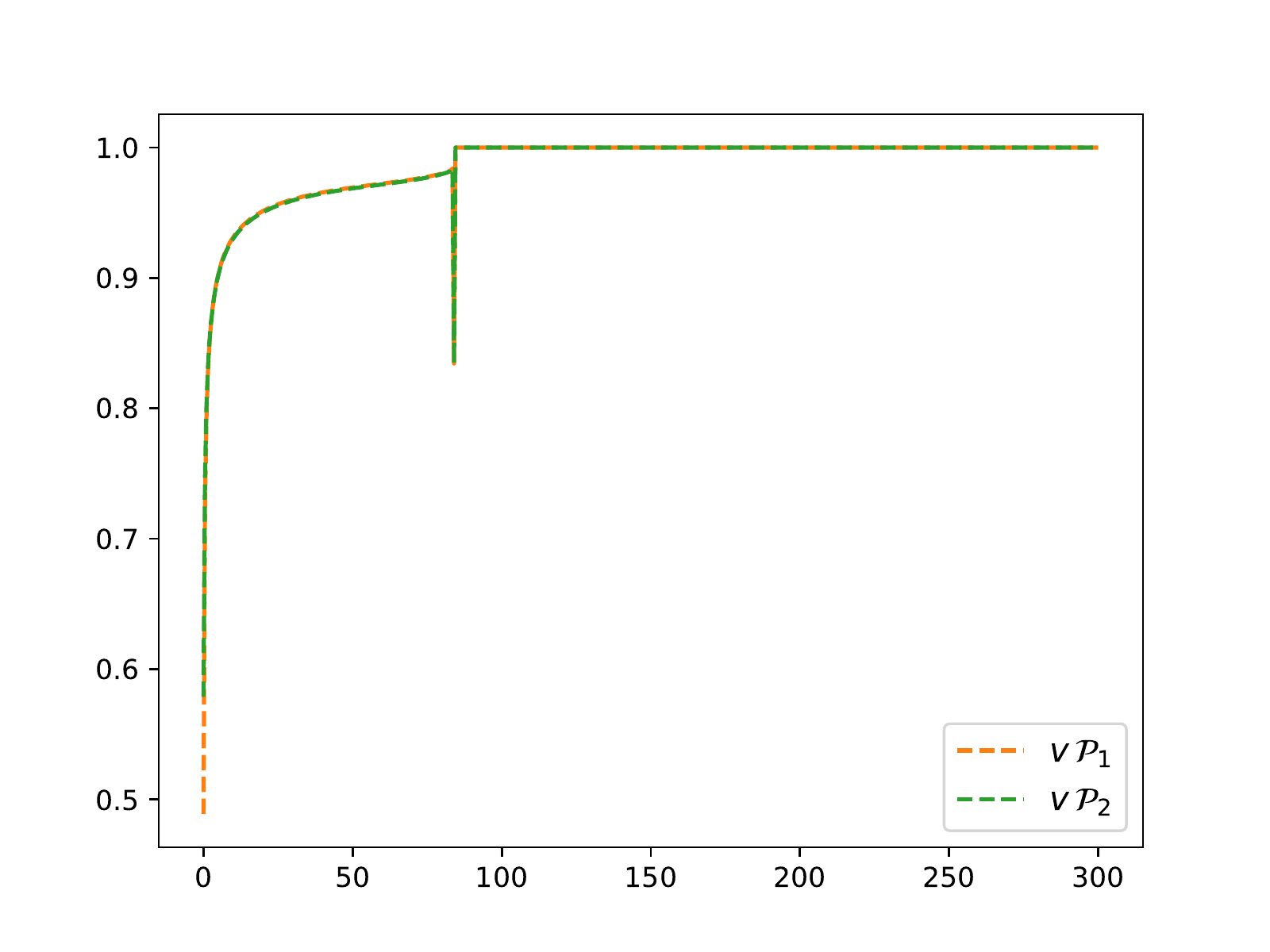}
	\end{center}
	\caption{Comparisons of numerical results for the methods ${\cal P}_0$, ${\cal P}_1$, ${\cal P}_2$}
	\label{fig:comparSIR124}
\end{figure}
\begin{table}[ht!]
	\begin{center}
		\begin{tabular}{c|c|c}
			problem & $\displaystyle \max_{t\in[0,T]} y(t)$ & computation time \\ \hline\hline
			${\cal P}_0$ &  $0.1015$ &  $10\, s$\\
			${\cal P}_1$ &  $0.1015$ &  $12\, s$\\
			${\cal P}_2$ &  $0.1015$ &  $13\, s$\\
			
		\end{tabular}
		\caption{Comparison of performances for problems ${\cal P}_0$, ${\cal P}_1$, ${\cal P}_2$}
		\label{table:results2}
	\end{center}
\end{table}
In Figure \ref{fig:comparSIR3} and Table \ref{table:results3}, the numerical solutions of $\mathcal{P}^{\theta}_3$ are illustrated for the values of $\alpha$ and $\lambda_2$ given in Table \ref{tab:alphalambda}.
\begin{table}[ht!]
	\begin{center}
		\begin{tabular}{c|c|c|c}
			$\varepsilon$ &$z(T)$& $\displaystyle \max_{t\in[0,T]} y(t)$ & computation time \\ \hline\hline
			$0.2$   & $0.0684$ & $0.1038$ &  $80\, s$\\
			$0.15$  & $0.0823$ & $0.1038$ &  $65\, s$\\
			$0.1$   & $0.0954$ & $0.1037$ &  $51\, s$\\
			$0.075$ & $0.0993$ & $0.1050$ &  $83\, s$\\
			$0.05$  & $0.1010$ & $0.1036$ &  $97\, s$
		\end{tabular}
		\caption{Comparison of performances for problem $\mathcal{P}^{\theta}_3$ }
		\label{table:results3}
	\end{center}
\end{table}
\begin{figure}[ht!]
	\begin{center}
		\includegraphics[width=0.3\textwidth]{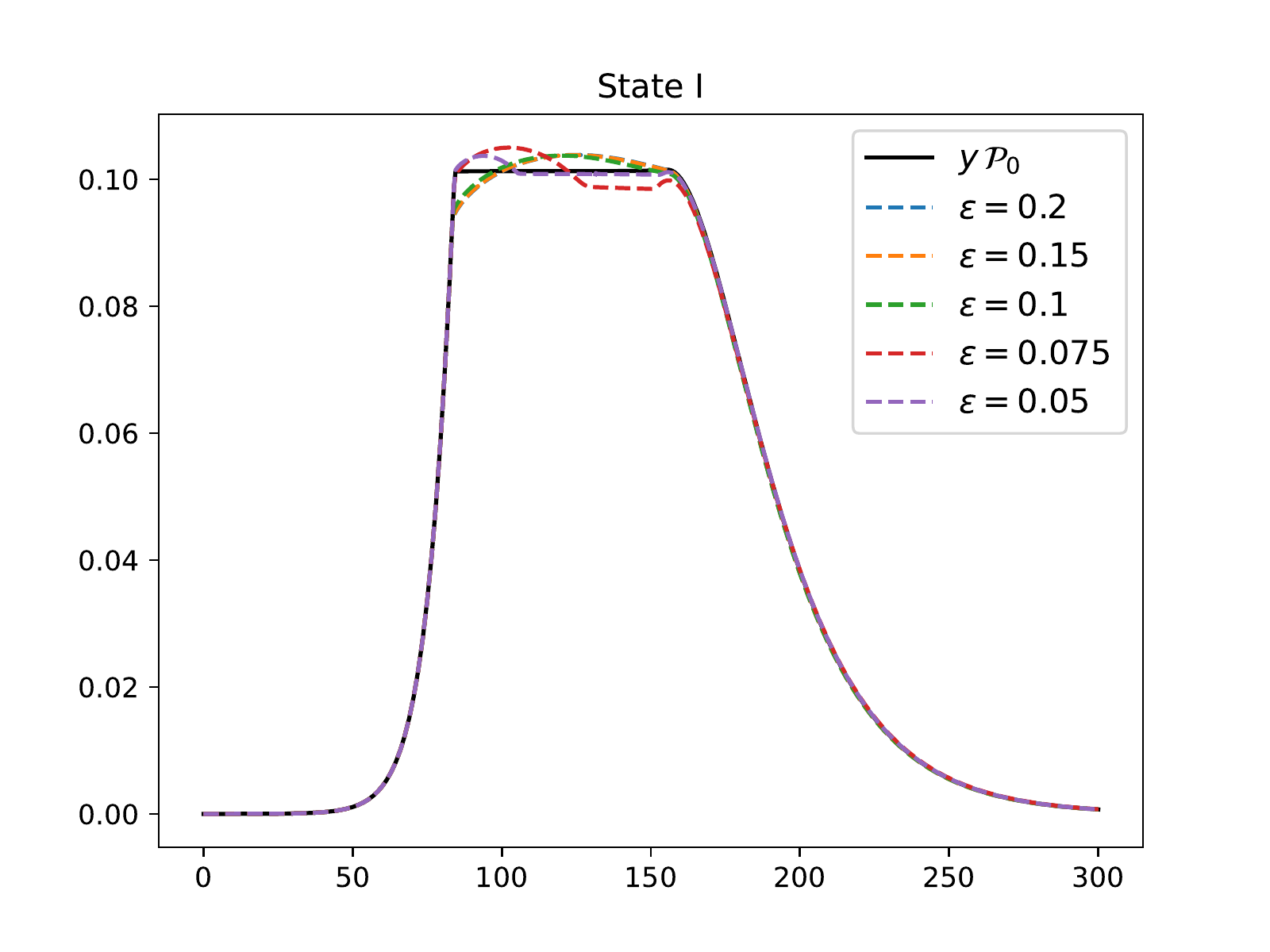}
		\includegraphics[width=0.3\textwidth]{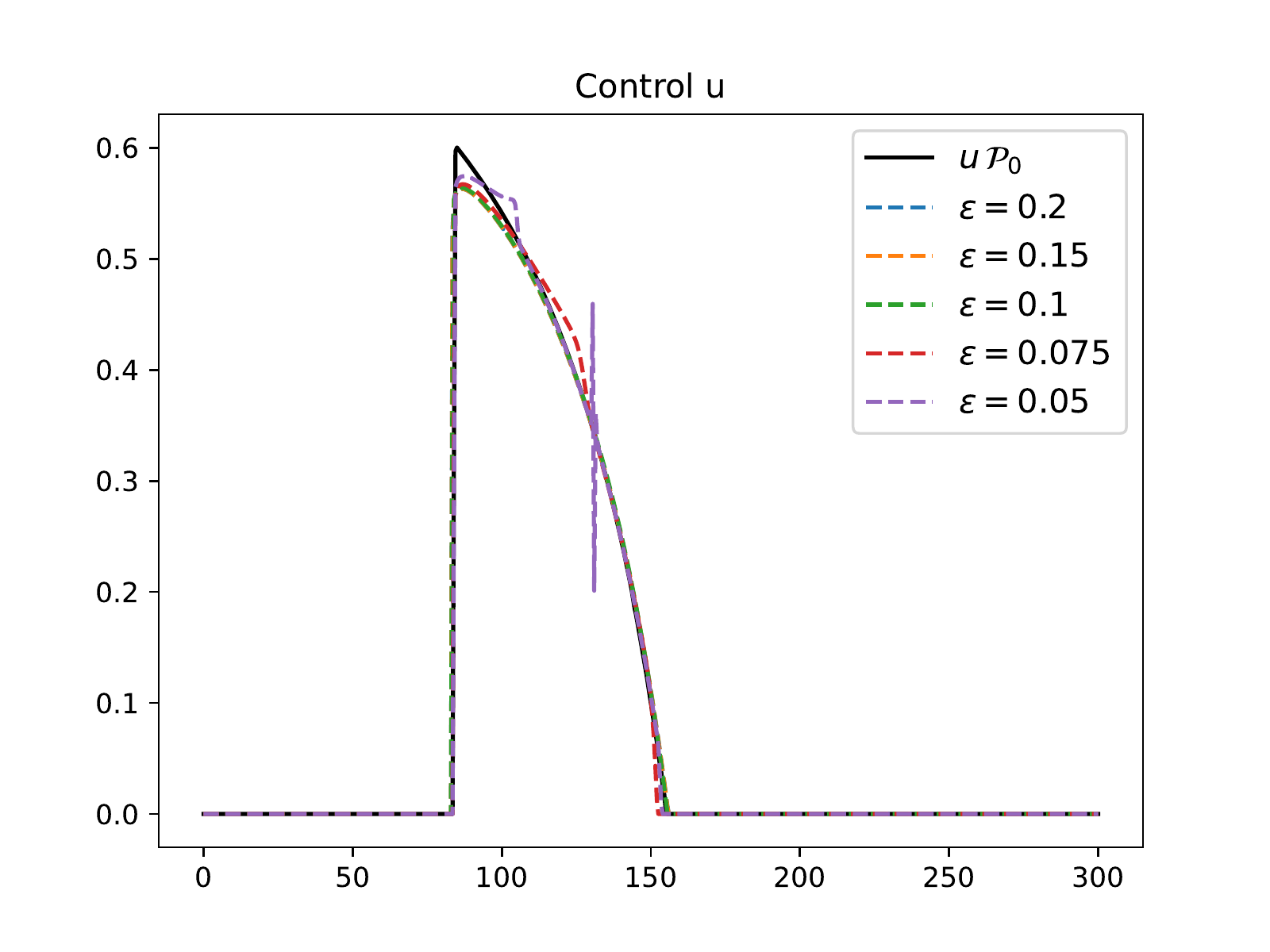}
		\includegraphics[width=0.3\textwidth]{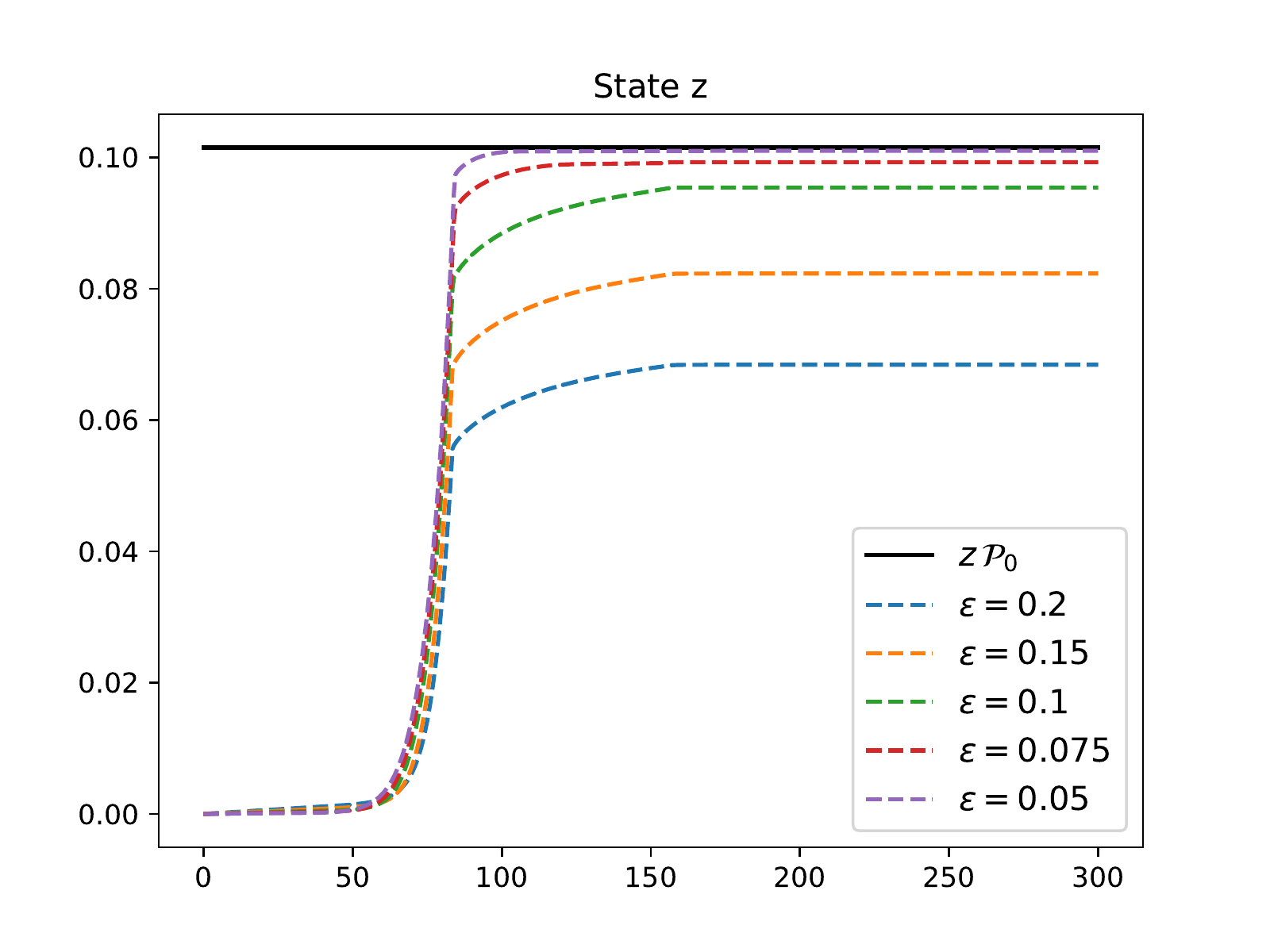}
	\end{center}
	\caption{Comparison of the numerical results for problem $\mathcal{P}^{\theta}_3$}
	\label{fig:comparSIR3}
\end{figure}
As expected, the numerical computation of the family of problems $\mathcal{P}^{\theta}_3$ provides an increasing sequence of approximation from below of the optimal value and thus complements the computation of problems $\mathcal{P}_0$,  $\mathcal{P}_1$ or $\mathcal{P}_2$. From Figures of Tables \ref{table:results2} and \ref{table:results3}, one can safely guarantee that the optimal value belongs to the interval $[0.1010,0.1015]$. However, the trajectories found for $\mathcal{P}^{\theta}_3$ are not as closed as the ones of problems $\mathcal{P}_0$,  $\mathcal{P}_1$ or $\mathcal{P}_2$. This can be explained by the fact that problems $\mathcal{P}^{\theta}_3$ are not subject to the constraint $z(t) \geq y(t)$ and thus provides trajectories for which $z(T)$ is indeed below $\max_t y(t)$. 


\bigskip

Finally, we have compared our approximation technique with the classical approximation of the $L_{\infty}$ criterion by $L_p$ norms
\begin{equation*}
	{\cal P}_{L_p} : \quad \inf_{u(\cdot) \in {\cal U}} ||y(t)||_{p}
\end{equation*}	
with the same direct method.
To speed up the convergence, we have used the Bocop facility which allows a batch mode which consists in initializing the search from a solution found for a former value of $p$, that have been taken  $p\in\lbrace 2,5,10,15\rbrace$ (see Figure \ref{fig:Lp}). Besides, to ensure convergence it was necessary take 1200 time step instead of 600 as in previous simulations.
\begin{figure}[ht!]
	\begin{center}
		\includegraphics[width=0.45\textwidth]{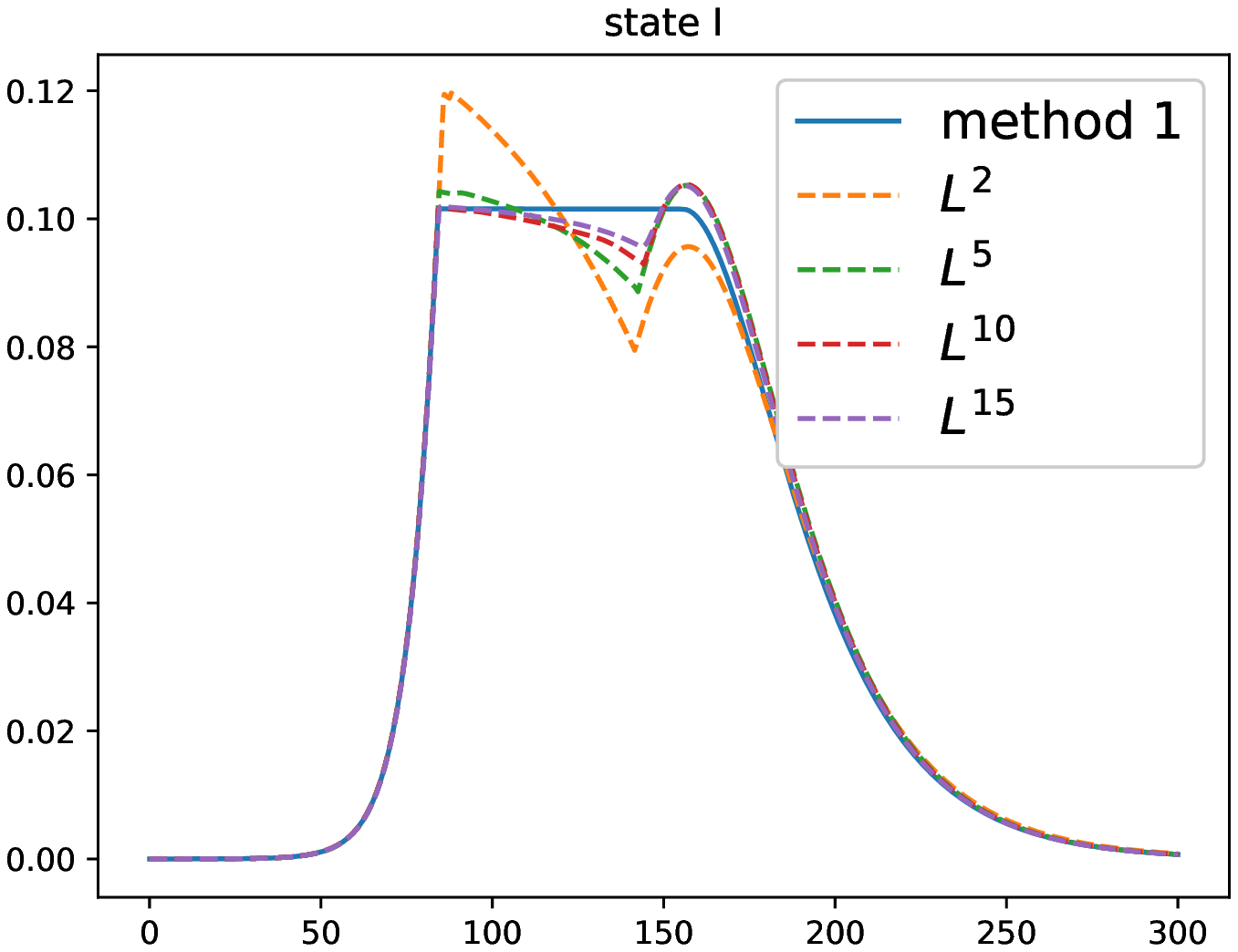}
		\includegraphics[width=0.45\textwidth]{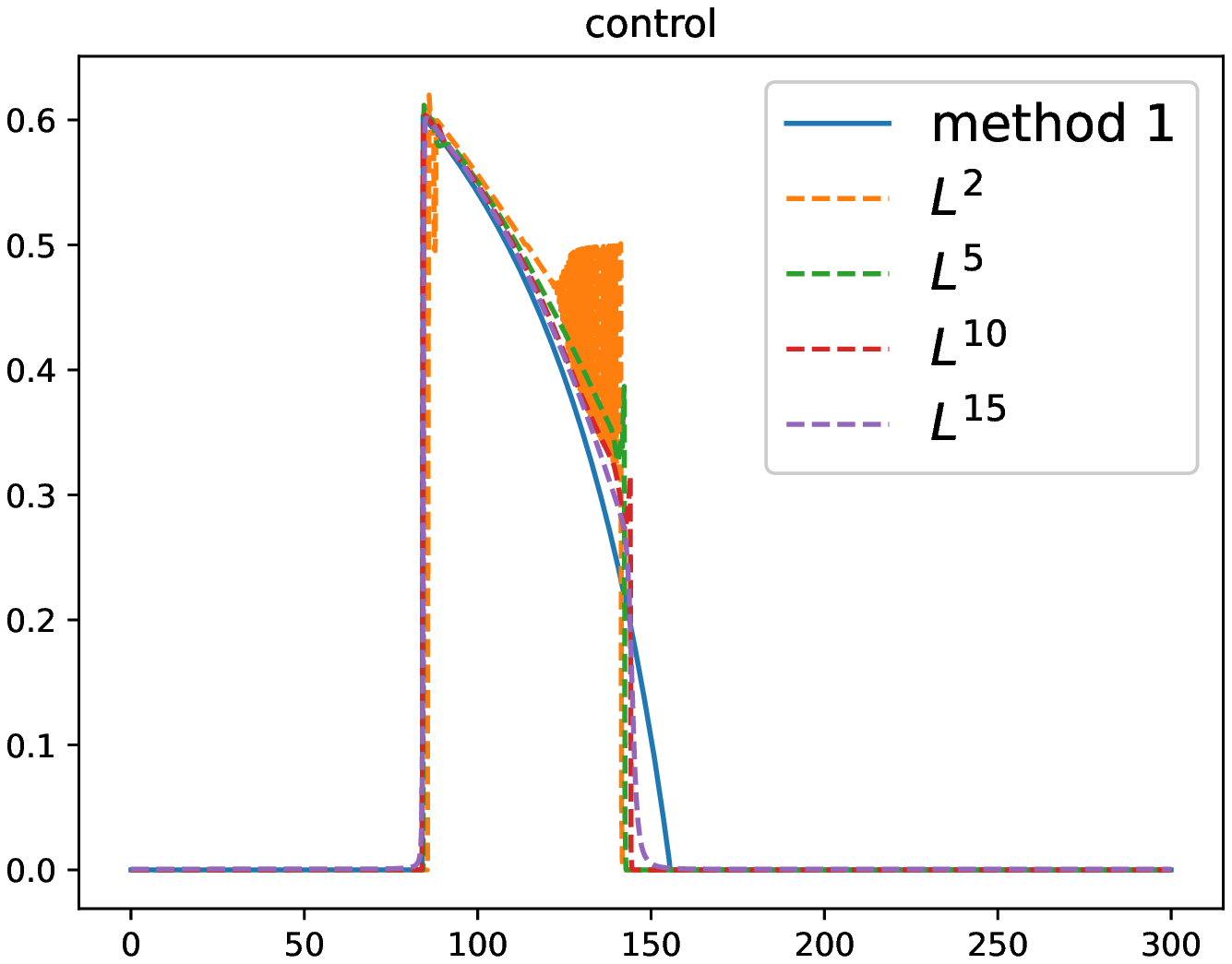}
	\end{center}
	\caption{Numerical solutions for problems ${\cal P}_{L_p}$}
	\label{fig:Lp}
\end{figure}
The total time of the process is $78 s$ after summing computation times given in Table \ref{table:resultsLp}.
\begin{table}[ht!]
	\begin{center}
		\begin{tabular}{c|c|c|c}
			$p$& $\displaystyle \max_{t\in[0,T]} y(t)$ & $||y(t)||_{p}$&computation time \\ \hline\hline
			$2$  & $0.119653$ & $1.0222$ &  $34\, s$\\
			$5$  & $0.105244$ & $0.2474$ &  $14\, s$\\
			$10$ & $0.105375$ & $0.15678$ &  $13\, s$\\
			$15$ & $0.105170$ & $0.13549$ &  $17\, s$\\
		\end{tabular}
		\caption{Comparison of the numerical results with the $L_p$ approximation}
		\label{table:resultsLp}
	\end{center}
\end{table}
However, one can see that the trajectory found for $p=15$ is quite far to give a peak value close from the other methods. Moreover, the same method for $p=15$ but initialized from the solution found for $p=2$ gives poor results for a computation time of $50 s$ (see Figure \ref{fig:Lp2}). We conclude that the $L_p$ approximation is not practically reliable for this kind of problems.
\begin{figure}[ht!]
	\begin{center}
		\includegraphics[width=0.45\textwidth]{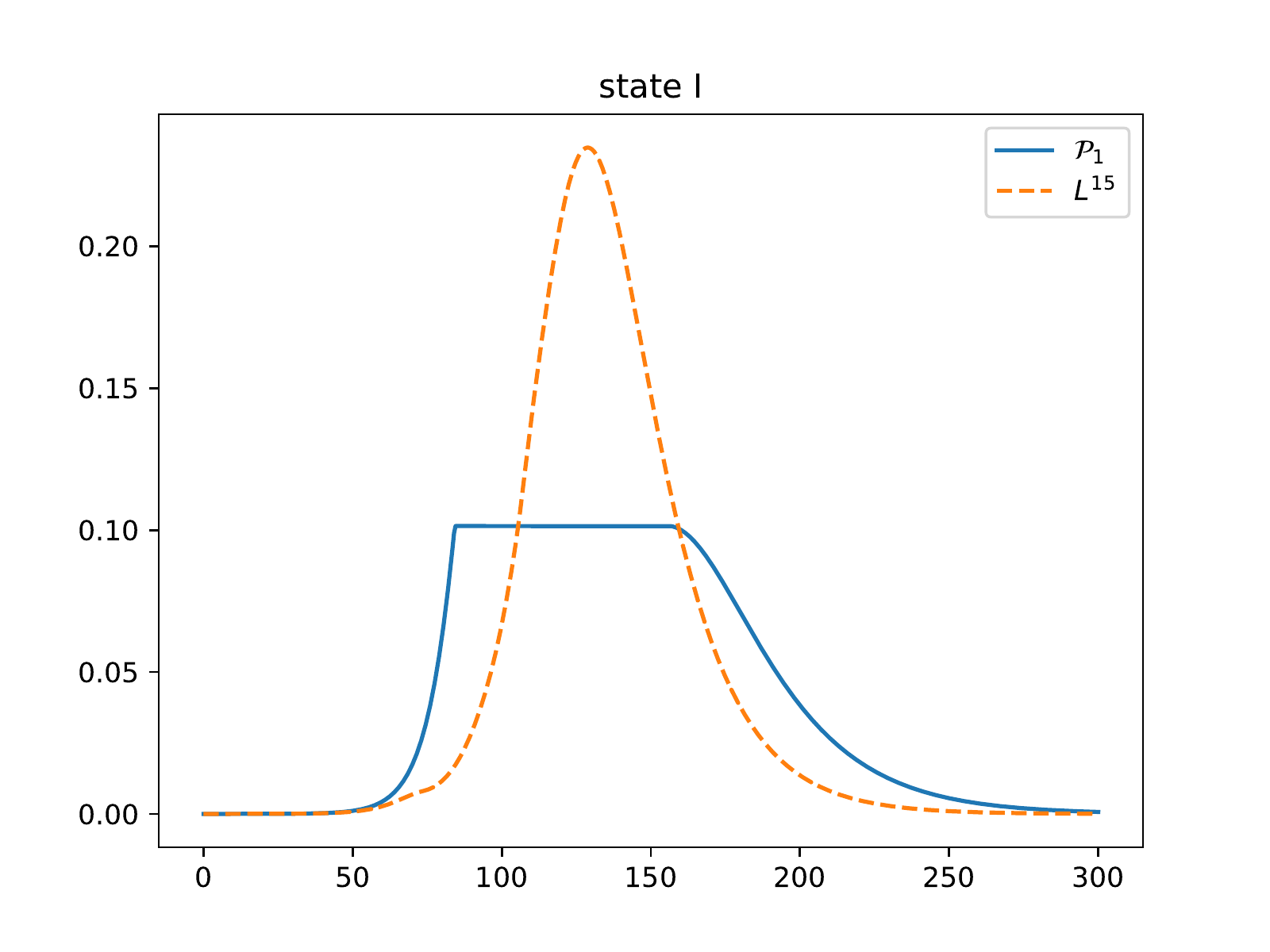}
		\includegraphics[width=0.45\textwidth]{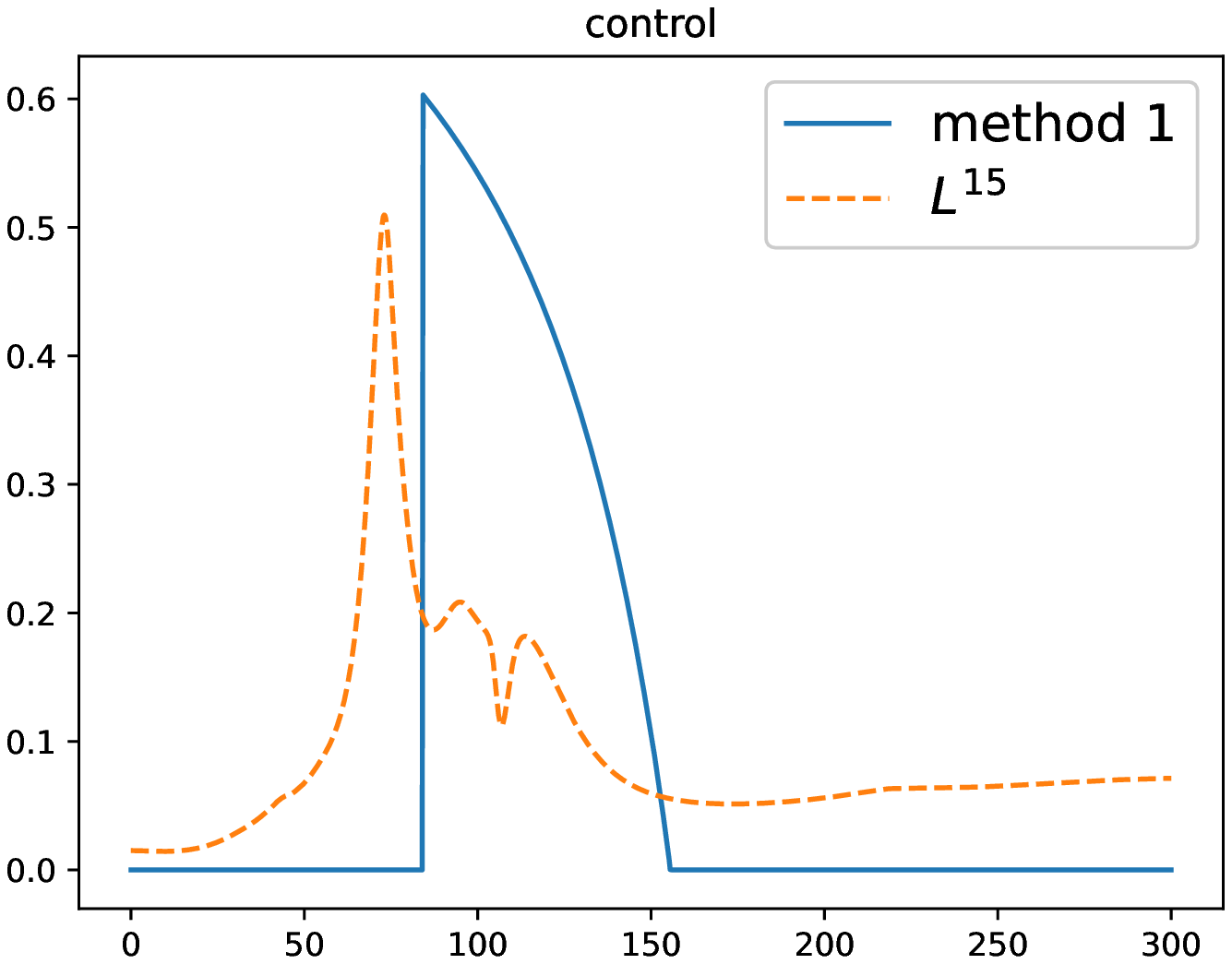}
	\end{center}
	\caption{Numerical solution for ${\cal P}_{L_{15}}$ without batch iteration (computation time $50 s$)}
	\label{fig:Lp2}
\end{figure}

\section{Discussion and conclusions}
\label{sec_conclusion}

In this work, we have presented different formulations of optimal control problems with maximum cost in terms of extended Mayer problems with fixed initial condition, and tested them numerically. We have proposed two classes of problems: one with state or mixed constraint suitable to direct methods, and another one without constraint but less regular and suitable to dynamical programming methods. Moreover, for this last class, we have proposed and approximation scheme with a sequence of regular Mayer problems without constraint, which turned out to give better results than approximations with the $L_p$ norms. Although this second approach requires larger computation time, it complements the first one providing approximations of the optimal value from above. 

Finally, we summarize some advantages and drawbacks of the different formulations for the use of numerical methods in Table \ref{table_comparison}.

\begin{table}[th!]
\begin{center}
    \begin{tabular}{l|c|c|c|c}
      Formulation   &  ${\cal P}_0$ & ${\cal P}_1$ or ${\cal P}_2$ & ${\cal P}_3$ & ${\cal P}_3^\theta$\\
      \hline\hline
    suitable to direct methods & yes  & yes & no & yes\\
    suitable to Hamilton-Jacobi-Bellman methods & no  & yes & yes & yes\\
    suitable to shooting methods without constraint & no  & no & no & yes\\
    provides approximations from below & no & no & no & yes
    \end{tabular}
    \caption{Comparison of the different formulations}
		\label{table_comparison}
	\end{center}
\end{table}

This first work puts in perspective the study of necessary optimality conditions for the maximum cost problems with the help of these formulations, which will be the matter of a future work.

\section*{Acknowledgments} The authors are grateful to Pierre Martinon for fruitful discussions and advices.
This work was partially supported by ANID-PFCHA/Doctorado Nacional/2018-21180348,
FONDECYT grant 1201982 and Centro de Modelamiento Matem\'atico (CMM), ACE210010 and FB210005, BASAL funds for center of excellence, all of them from ANID (Chile)

\end{document}